\newtheorem{mytheo}{Theorem}[section]
\newtheorem{mydef}[mytheo]{Definition}
\newtheorem{remark}[mytheo]{Remark}
\newtheorem{mylemma}[mytheo]{Lemma}
\newtheorem{assum}[mytheo]{Assumption}
\numberwithin{equation}{section}
\newcommand{\norm}[1]{\left\Vert#1\right\Vert}
\newcommand{\abs}[1]{\left\vert#1\right\vert}
\newcommand{\sinc}{\textmd{\rm{sinc}}}
\begin{document}

\title[Geometric low-regularity integrator]{Solving  nonlinear Klein-Gordon equation with non-smooth solution by a geometric low-regularity integrator}

\author[B. Wang]{Bin Wang}\address{\hspace*{-12pt}B.~Wang: School of Mathematics and Statistics, Xi'an Jiaotong University, 710049 Xi'an, China}
\email{wangbinmaths@xjtu.edu.cn}

\author[Z. Miao]{Zhen Miao}
\address{\hspace*{-12pt}Z. Miao: School of Mathematics and Statistics,
          Northwestern Polytechnical University,
         710072 Xi'an,  China.}
\email{mz91127@126.com}

\author[Y. L. Jiang]{Yaolin Jiang}
\address{\hspace*{-12pt}Y. L.~Jiang: School of Mathematics and Statistics, Xi'an Jiaotong University, 710049 Xi'an, China}
\email{yljiang@mail.xjtu.edu.cn}
\date{}

\dedicatory{}

\begin{abstract}
 In this paper, we formulate and analyse a geometric low-regularity integrator for solving the nonlinear Klein-Gordon equation in the $d$-dimensional space with $d=1,2,3$. The integrator is constructed based on the two-step trigonometric method and thus it has a simple form. Error estimates are rigorously presented to show that the integrator can achieve second-order time accuracy in the energy space under the regularity requirement in $H^{1+\frac{d}{4}}\times H^{\frac{d}{4}}$. Moreover, the time symmetry of the scheme ensures its good long-time energy, momentum and action  conservations which are rigorously proved by the technique of modulated Fourier expansions. A numerical test is presented and the numerical results demonstrate the superiorities of the new integrator over some existing methods.
\\
{\bf Keywords:}  Geometric low-regularity integrator,  Error estimate, Long-time analysis,  Klein--Gordon equation, Modulated Fourier expansion\\
{\bf AMS Subject Classification:} 35L70, 65M12, 65M15, 65M70.
\end{abstract}

\maketitle


\section{Introduction}\label{sec:introduction}
As an significant  kind of  wave type equations, the nonlinear Klein-Gordon equation plays an important role
in the description of physical phenomena arising from different branches of physics such as plasma physics, particle physics, quantum field theory and cosmology \cite{Bambusi03,Cohen08}.
There has been a growing interest to investigate the numerical solutions of the Klein-Gordon equation and this paper is devoted to    time discrete methods. Various numerical methods in time direction have been well developed and researched  in recent decades such as  trigonometric/exponential integrators \cite{Buchholz2021,Gauckler15,Lubich99}, uniformly accurate methods \cite{Bao14,PI1,S2,Chartier15},   splitting methods \cite{Bao22,Buchholz2018,Faou15} and structure-preserving methods \cite{Chen20,Cohen08-1,LI20,zhao18,W23}.

In this paper, we pay attention to the low-regularity (LR) integrators for the nonlinear Klein--Gordon equation (NKGE):
\begin{equation}\label{klein-gordon}
\begin{aligned}
&\partial_{tt}u(t,x)-\Delta u(t,x)=f\big(u(t,x)\big),\ u(0,x)=\varphi_1(x),\ \partial_{t}u(0,x)=\varphi_2(x),\  0< t\leq T,\ x\in \mathbb{T}\subset \mathbb{R}^d,
\end{aligned}
\end{equation}
which is supplemented with the periodic boundary condition on the
torus $\mathbb{T}$,
where $u(t,x)$ represents the wave displacement at
time $t$ and position $x$, $d=1,2,3$ is the dimension of $x$ and   $f(u)$ is a nonlinear function chosen as the
negative derivative of a potential energy $U(u)\geq 0$. 
The NKGE \eqref{klein-gordon} is  a
Hamiltonian PDE  
with the Hamiltonian $\mathcal{H}(u,v)=\frac{1}{2}\int_{\mathbb{T}}\big(v^2+|\nabla u|^2+2U(u)\big){\rm
d}x,$
which is exactly conserved along the solution $u,v:=\partial_{t}u$ of \eqref{klein-gordon}.  The momentum
$
K(u,v)= \int_{\mathbb{T}}\partial_{x}uvdx$
is also  exactly conserved by the solution.

For the system  \eqref{klein-gordon}, it is well known that  if the initial data $(\varphi_1,\varphi_2)$ is in the energy space $H^1(\mathbb{T})\times L^2(\mathbb{T})$, the solution $(u,\partial_tu)$
is also   bounded in the energy space  for any $0< t\leq T$.
However, due to the lack of sufficient regularity which ensures the boundedness of high-order derivatives of the solution, classical time-stepping second-order methods such as some splitting methods and trigonometric integrators  approximate the solution  $(u,\partial_tu)$  with second-order convergence only in
the weaker space $L^2(\mathbb{T})\times H^{-1}(\mathbb{T})$. If the  second order convergence  is given in the energy space $H^1(\mathbb{T})\times L^2(\mathbb{T})$,  the
initial data is generally required  to be in the stronger space $H^2(\mathbb{T})\times H^1(\mathbb{T})$.
To overcome this barrier,  low-regularity  (LR)  integrators have been studied in recent years  \cite{S1,LI22,LI23,SU23,O18,O21,O22,R21,Zhao23}. For the system of NKGE, the method presented in \cite{R21} can have second-order convergence in the energy space $H^1(\mathbb{T})\times L^2(\mathbb{T})$ under the weaker regularity condition
 $(\varphi_1 ,\varphi_2 )\in H^{\frac{7}{4}}(\mathbb{T})\times H^{\frac{3}{4}}(\mathbb{T})$.
For a special nonlinear case $f(u)=u^3$ in one dimension $d=1$, a symmetric low-regularity  integrator was derived in \cite{Zhao23} and the method has second-order convergence in $H^{r}(\mathbb{T}) \times H^{r-1}(\mathbb{T})$ under the same regularity condition $(\varphi_1 ,\varphi_2 )\in H^{r}(\mathbb{T}) \times H^{r-1}(\mathbb{T})$. More recently, for the system  \eqref{klein-gordon} with a general nonlinear function $f(u)$, a second-order
low-regularity  integrator was given in \cite{LI23} and it can get second order convergence in  $H^1(\mathbb{T})\times L^2(\mathbb{T})$ under the weaker regularity condition
 $(\varphi_1 ,\varphi_2 )\in H^{1+\frac{d}{4}}(\mathbb{T})\times H^{\frac{d}{4}}(\mathbb{T})$ with $d=1,2,3.$

Up to now it seems that all the low-regularity  integrators  are more complicated than the traditional algorithms
since they are usually designed by a complicated process.  
 Moreover, a pity within most of LR  integrators so far is the lack of long-time analysis on the behaviour in the conservation laws.  The main difficulty lies in the complicated scheme of LR  integrators which brings great  constriction in the analysis.
The aim of this paper lies in approaching solutions to these problems:  we will formulate a simple geometric  low-regularity  integrator  and analyse its long time behaviour for  the nonlinear Klein--Gordon equation \eqref{klein-gordon} with a general nonlinear function $f(u)$ and the dimension $d=1,2,3$. The central idea to achieve this is embedding the symmetry in time and two-step trigonometric method  into the numerical discretisation.  In the analysis, the key step lies in
the full use of  the structure in the Klein–Gordon equation to reduce regularity and the essential
 technique of  modulated Fourier expansions \cite{Cohen08,Cohen08-1,hairer2000,Cohen08-0,hairer2006,WZ21}.
The new integrator will in particular have the following novelties:

 (I) The integrator is a traditional two-step trigonometric method and thus its scheme is very simple. In each step of the calculation, the scheme only needs to calculate the nonlinear function $f(u)$ once without any other nonlinear terms. Thus the proposed integrator has very small calculation cost compared with the existing low-regularity  methods in the  literature.
(II) The proposed integrator  has  low-regularity property, i.e., it can have second order convergence in  $H^1(\mathbb{T})\times L^2(\mathbb{T})$ under the weaker regularity condition
 $(\varphi_1,\varphi_2)\in H^{1+\frac{d}{4}}(\mathbb{T})\times H^{\frac{d}{4}}(\mathbb{T})$ with $d=1,2,3.$
(III) We successfully combine the time symmetry with this  LR integrator and establish a  rigorous long-time  analysis for the  proposed scheme. By  the technique of  modulated Fourier expansions,  long-time almost  conservations are shown for the proposed geometric  low-regularity integrator.


The rest of this paper is organized as follows. In Section \ref{sec:symmetric LR}, we present the formulation of  a symmetric low-regularity integrator and carry out a numerical test to show its superiorities in comparison with some existing methods in the literature.  The error estimates of the proposed scheme are given in Section \ref{sec:3} and long-time analysis is  established in Section \ref{sec:4}.

\section{Geometric  low-regularity  integrator and numerical test}\label{sec:symmetric LR}
Before getting into the formulation of integrator, we introduce some notations which will be used in the whole paper.
Define the operator
$\mathcal{A}$ by  $
(\mathcal{A}w)(x):=-\Delta  w(x)$ with $w(x)$ on $\mathbb{T}$. For simplicity, we denote
\begin{equation}\label{notations w}\begin{aligned}&\sinc(t):=\frac{\sin(t)}{t},\ \alpha(t):=\left(
              \begin{array}{c}
                \cos(t\sqrt{\mathcal{A}})  \\
                t \sinc( t\sqrt{\mathcal{A}})  \\
              \end{array}
            \right),\   \beta(t):=\left(
              \begin{array}{c}
                t \sinc( t\sqrt{\mathcal{A}})    \\
                \cos(t\sqrt{\mathcal{A}}) \\
              \end{array}
            \right),\   \gamma(t): =\left(
              \begin{array}{c}
                \sqrt{\mathcal{A}} \sin(t\sqrt{\mathcal{A}})    \\
                \cos(t\sqrt{\mathcal{A}}) \\
              \end{array}
            \right).
\end{aligned}\end{equation}
We also suppress the $x$-dependence of the unknown
functions   $u(t), v(t)$. The  time stepsize used in the integrator is denoted by  $0<h <1$ and for the time we consider $t_n=n h $ for $n\in \mathbb{N}^{+}$. We denote
    the Sobolev space by $H^{\nu}(\mathbb{T})$ with  $\nu\geq 0$ and its  norm   is referred as $\norm{\cdot}_{H^{\nu}}$.
    We shall denote $A\lesssim B$ for $A\leq CB$, where $C>0$ is a generic constant independent of $h $ and $n$.
\subsection{Explicit symmetric low-regularity  integrator}
In the formulation, the Duhamel’s formula for   the nonlinear Klein--Gordon
equation \eqref{klein-gordon} is a key theoretical tool which reads
\begin{equation}\label{Duhamel}
\begin{aligned}
u(t_n+s)=&\cos(s\sqrt{\mathcal{A}}) u(t_n)+s \sinc( s\sqrt{\mathcal{A}})  v(t_n)+\int_0^{s } (s-\theta)\sinc( (s-\theta)\sqrt{\mathcal{A}}) f\big( u(t_n+\theta)\big)d\theta,\\
v(t_n+s )=&-s\mathcal{A} \sinc( s\sqrt{\mathcal{A}}) u(t_n)+\cos(s\sqrt{\mathcal{A}})  v(t_n)+\int_0^{s }  \cos( (s-\theta)\sqrt{\mathcal{A}}) f\big( u(t_n+\theta)\big)d\theta,
\end{aligned}
\end{equation}
at $t=t_n+s$ with $s\in \mathbb{R}$.
Letting $s=\pm h $ in the  formulae   \eqref{Duhamel} and  then combining them by
some calculations, we obtain
\begin{equation}\label{u2add}
\begin{aligned}
u(t_{n+1})&+u(t_{n-1})
=2\cos(h \sqrt{\mathcal{A}}) u(t_n)+\int_0^{h} (h -\theta)\sinc( (h -\theta)\sqrt{\mathcal{A}}) \Big(f\big( u(t_n+\theta)\big)+f\big( u(t_n-\theta)\big)\Big)d\theta,\\
v(t_{n+1})&-v(t_{n-1})=-2h \mathcal{A} \sinc( h \sqrt{\mathcal{A}}) u(t_n)+\int_0^{h  }  \cos( (h -\theta)\sqrt{\mathcal{A}}) \Big(f\big( u(t_n+\theta)\big)+f\big( u(t_n-\theta)\big)\Big)d\theta.
\end{aligned}
\end{equation}
By choosing some numerical approximations of the integrals appeared above, we define the following two-step method for solving the Klein--Gordon
equation \eqref{klein-gordon}.
\begin{mydef}\label{def-se} (\textbf{Symmetric  low-regularity  integrator})
Choose a time stepsize $0<h <1$ and denote the numerical solution as  $u_{n}\approx u(t_{n},x)$ and   $v_{n}\approx v(t_{n},x)$ at  $t_n=nh $. The two-step scheme for solving the   Klein--Gordon
equation \eqref{klein-gordon} is defined as
 \begin{equation}\label{method}
\begin{aligned}
u_{n+1}=&2\cos(h \sqrt{\mathcal{A}}) u_n-u_{n-1}+h ^2\sinc(  h \sqrt{\mathcal{A}}) f( u_n),\\
v_{n+1}=&-2h \mathcal{A} \sinc( h \sqrt{\mathcal{A}}) u_n+v_{n-1}+h \big(\cos(h \sqrt{\mathcal{A}}) +\sinc(  h \sqrt{\mathcal{A}})\big) f( u_n),
\end{aligned}
\end{equation}
where  $ n=1,2,\ldots,T/h -1,$ $u_0=\varphi_1(x)$ and $ v_0=\varphi_2(x)$.
The other starting value $u_1,v_1$ is obtained by the LR method  proposed in \cite{LI23}:
 \begin{equation}\label{start value}
\begin{aligned}
u_{1}=&\cos(h \sqrt{\mathcal{A}}) u_0+h  \sinc( h \sqrt{\mathcal{A}})  v_0+\frac{h ^2}{2} \sinc( h \sqrt{\mathcal{A}}) f( u_0) +h  \frac{ \sinc( h \sqrt{\mathcal{A}})-\cos(h \sqrt{\mathcal{A}})}{2\mathcal{A}}f'( u_0)v_0,\\
v_{1}=&-h \mathcal{A} \sinc( h \sqrt{\mathcal{A}}) u_0+\cos(h \sqrt{\mathcal{A}}) v_{0}+h
\frac{\cos(h \sqrt{\mathcal{A}}) + \sinc( h \sqrt{\mathcal{A}})}{2}  f( u_0)  +
\frac{h ^2\sinc( h \sqrt{\mathcal{A}})}{2}f'( u_0)v_0,
\end{aligned}
\end{equation}
 where the prime on $f$ indicates the   derivative  of $f(u)$ w.r.t. $u$.
\end{mydef}

\begin{remark}
It can be seen clearly that the explicit  scheme \eqref{method} is very simple and the computation cost is very low which will be shown by the numerical results of next subsection. Although
\eqref{start value} is not so concise as \eqref{method}, we just use it once to get the starting value and thus  the computational complexity is basically unaffected. Moreover,  the  integrator \eqref{method}  is obviously a time symmetric method, i.e., by switching $h  \leftrightarrow -h $ and  $n+1 \leftrightarrow n-1$, the scheme \eqref{method} remains the same. Thus  a good long-time behavior of this scheme is inherited which will be rigorously studied in  Section \ref{sec:4}.
\end{remark}
%
%
\subsection{Numerical test}\label{sec:experiments}
Now we show the numerical performance of the proposed integrator by   a numerical test. We consider
the NKGE \eqref{klein-gordon} with $d=1,\mathbb{T}=(-\pi,\pi), f(u)=\sin(u)$.
The initial values $ \varphi_1(x)$ and $\varphi_2(x)$ are chosen   as described in Section 5.1 of \cite{O18} and Section 4 of \cite{Zhao23}.
This random initial data  is
from the space  $H^{\theta}(\mathbb{T})\times H^{\theta-1}(\mathbb{T})$ and has the bound $\norm{\varphi_1(x)}_{H^1}=1$ and $\norm{\varphi_2(x)}_{L^2}=1$. We use the fourier spectral collocation method to the  spatial domain with  grid points $x_j=j\pi/N_{x}$ for $j=-N_{x},-N_{x}+1,\ldots,N_{x}-1$.
   \begin{figure}[t!]
\centering
\includegraphics[width=4.5cm,height=4.3cm]{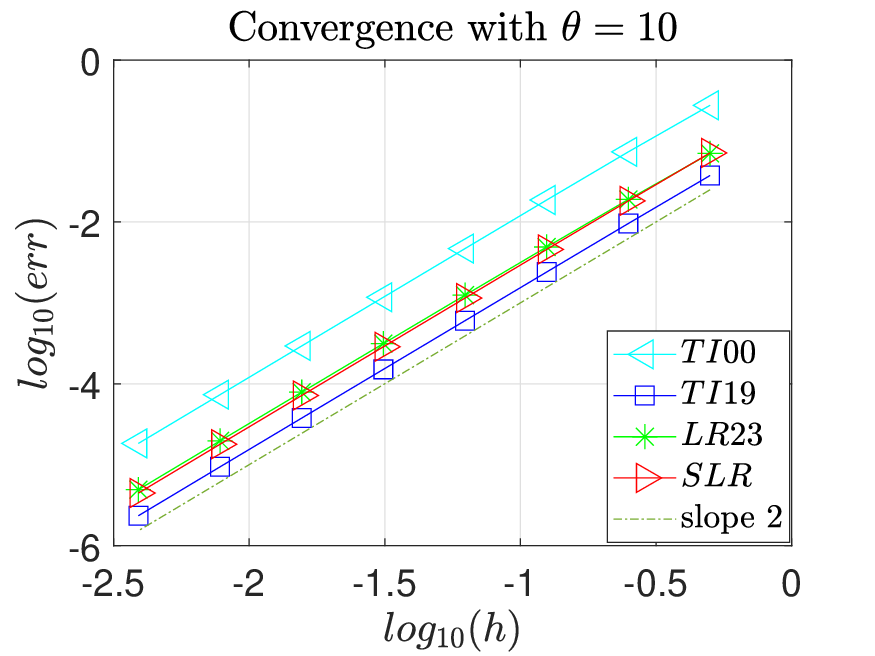}
\includegraphics[width=4.5cm,height=4.3cm]{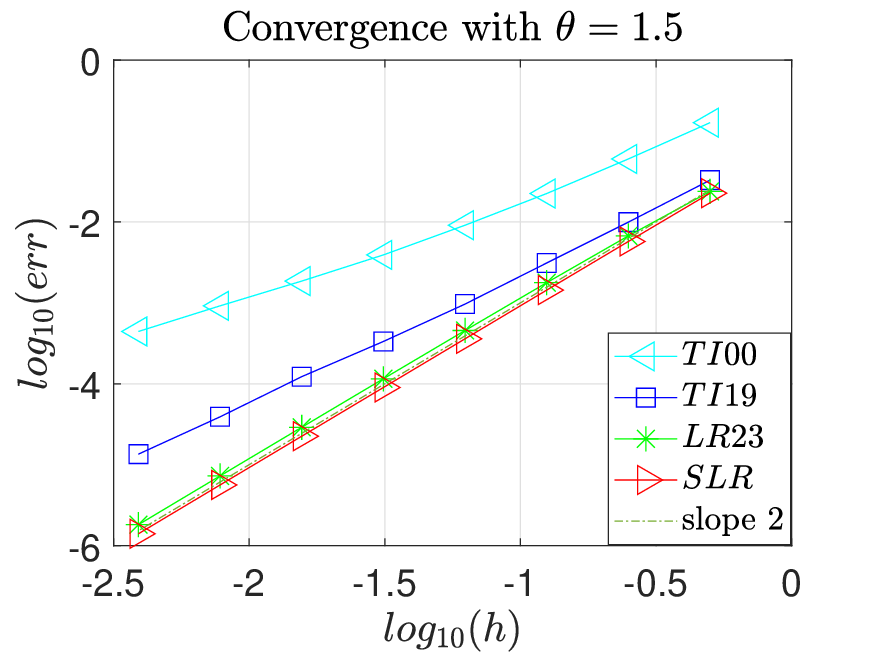}
\includegraphics[width=4.5cm,height=4.3cm]{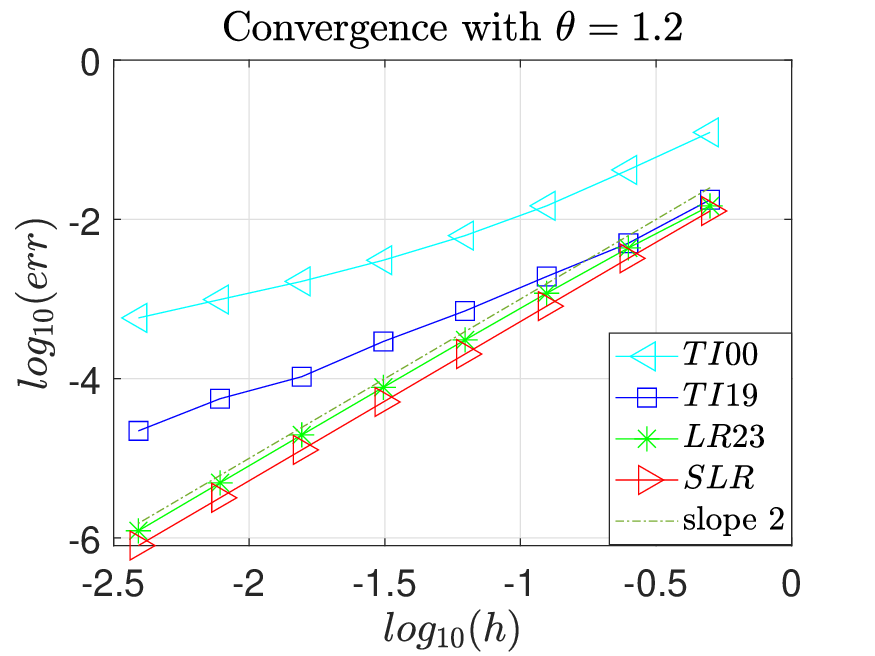}\\
\includegraphics[width=4.5cm,height=4.3cm]{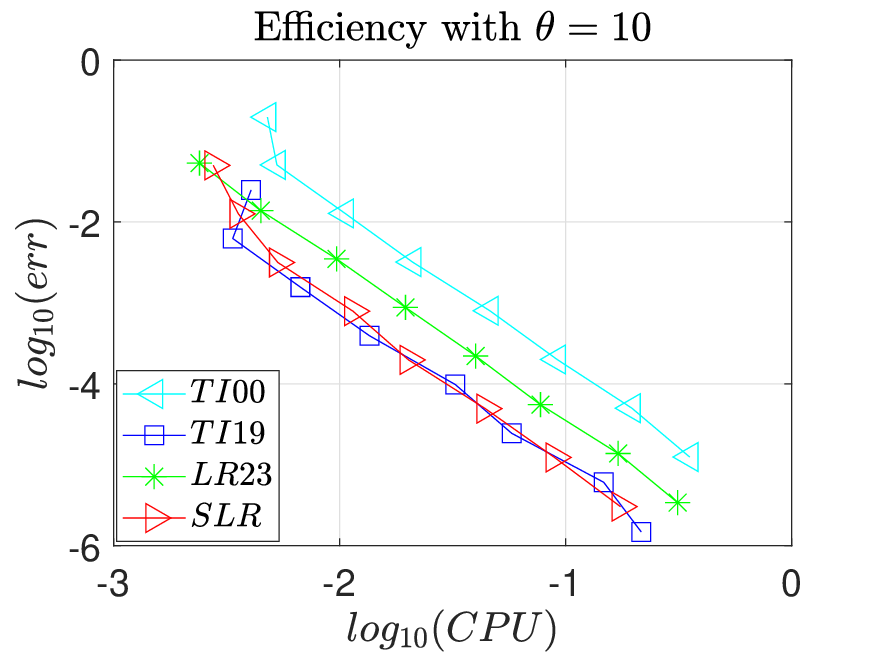}
\includegraphics[width=4.5cm,height=4.3cm]{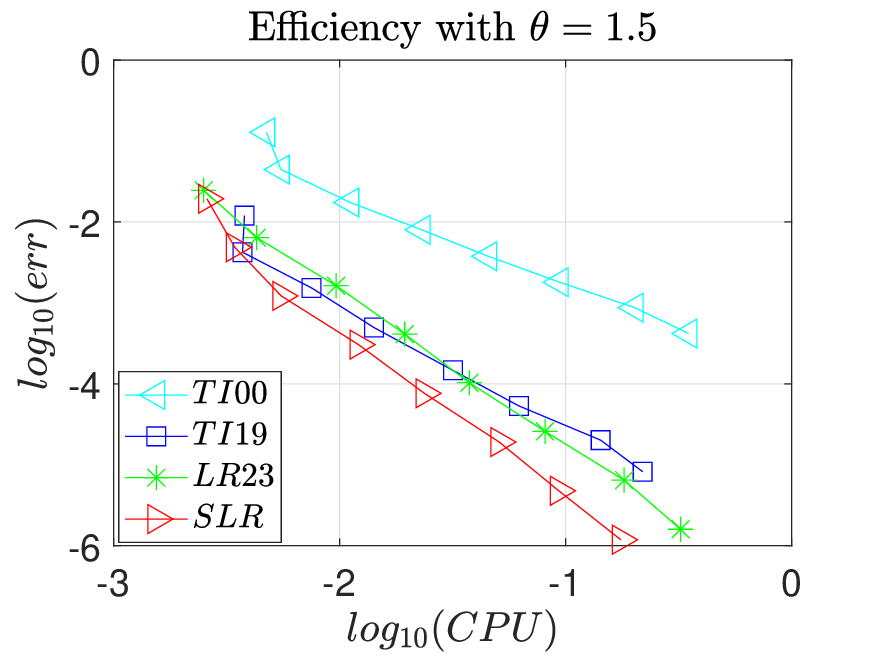}
\includegraphics[width=4.5cm,height=4.3cm]{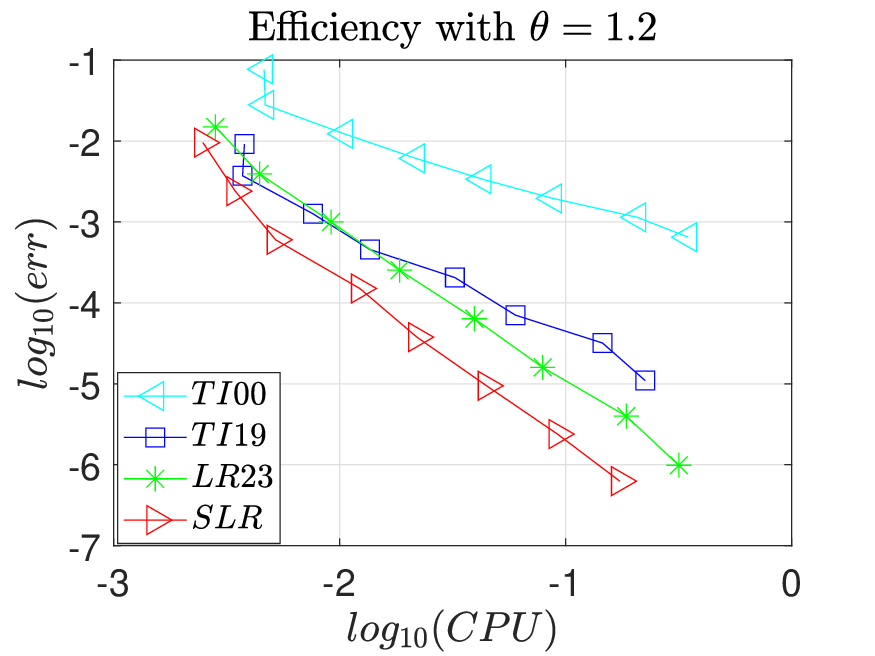}
\caption{Errors of the numerical solutions with  initial data $H^{\theta}(\mathbb{T}) \times H^{\theta-1}(\mathbb{T})$ against $h $  (1st row)  and CPU time (2nd row) with $h =1/2^k$, where $k=2,3,\ldots,9$.} \label{p1}
\end{figure}

\textbf{Accuracy.}  For comparison, we consider
the classical   second-order trigonometric integrators from \cite{hairer2000}  and  \cite{W23} and we respectively denote them by
TI00 and TI19. Meanwhile, we choose the second-order low-regularity  method derived in  \cite{LI23} and denote it by LR23. The novel symmetric low-regularity  method proposed in this paper is referred as SLR.
In this test,
we apply the methods to
solve this system with $N_x=64$ and  display  the global errors
 $err=\frac{\norm{u_n-u(t_n)}_{H^{1}}}{\norm{u(t_n)}_{H^{1}}}+\frac{\norm{v_n-v(t_n)}_{L^{2}}}{\norm{v(t_n)}_{L^{2}}}$ at $t_n=1$. The results for the initial value $(\varphi_1,\varphi_2)\in H^{\theta}(\mathbb{T})\times H^{\theta-1}(\mathbb{T})$ with different $\theta$ are given  in Figure \ref{p1} (1st row). In the light of these results, we have the following observations. For the strong-regularity  initial data ($\theta=10$), all the methods performance second order convergence. However, for the  low-regularity initial data ($\theta=1.5, 1.2$),  only the new integrator SLR proposed in this article and the method LR23 derived in  \cite{LI23} have second-order convergence in $H^{1}(\mathbb{T})\times L^{2}(\mathbb{T})$.

 \textbf{Efficiency.} Compared with the existing methods, the scheme of our integrator given in this paper is very simple.
Only one calculation of  the nonlinear function $f(u)$ is required  in each computation step. To show the efficiency of our scheme, we
 solve this problem on  $[0,5]$ and present the  log-log plot of the temporal error   against CPU time
 in Figure \ref{p1}  (2nd row). It can be seen clearly that  our integrator needs very low computation cost and  has very competitive efficiency.

   \begin{figure}[t!]
\centering
\includegraphics[width=4.5cm,height=4.3cm]{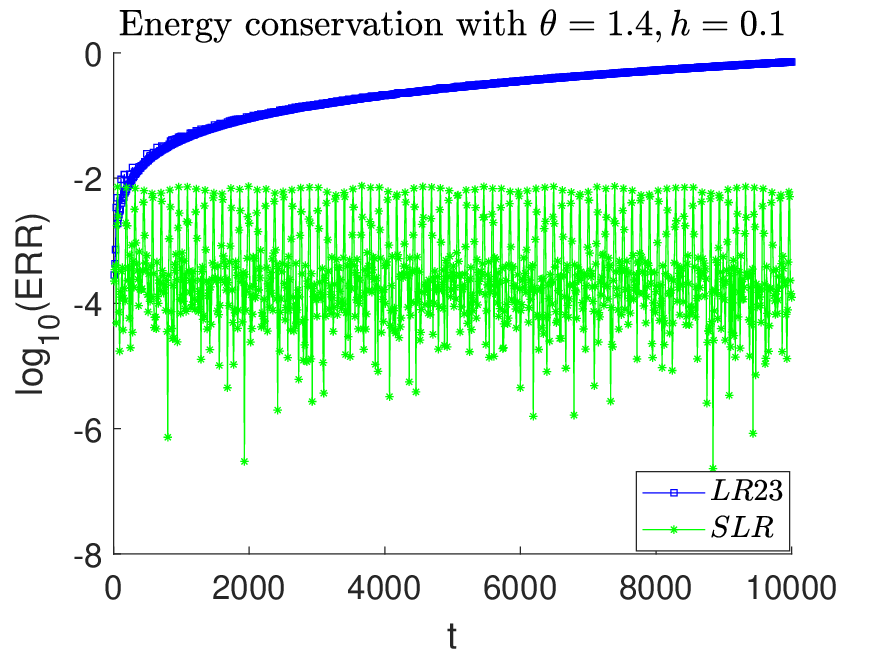}
\includegraphics[width=4.5cm,height=4.3cm]{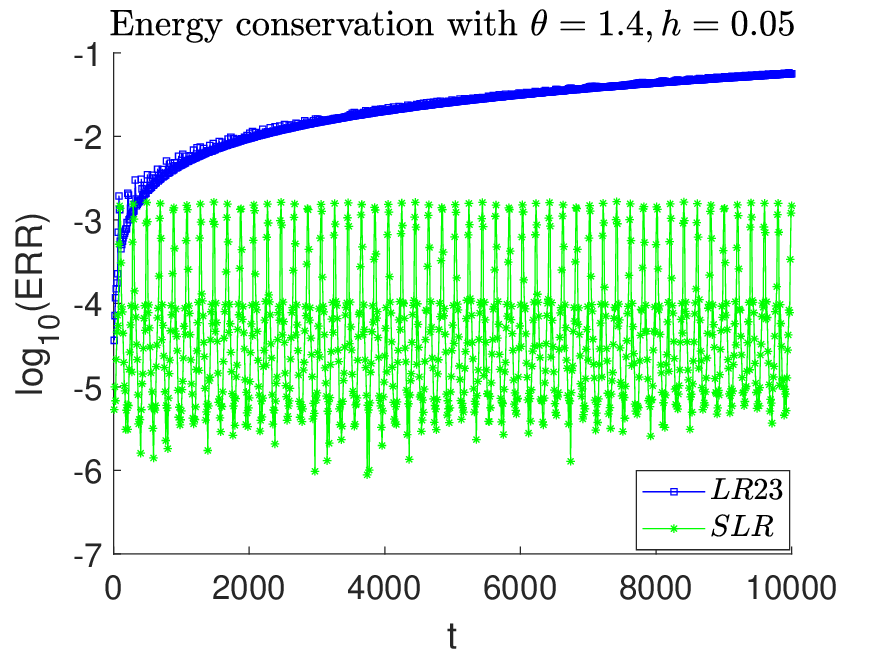}
\includegraphics[width=4.5cm,height=4.3cm]{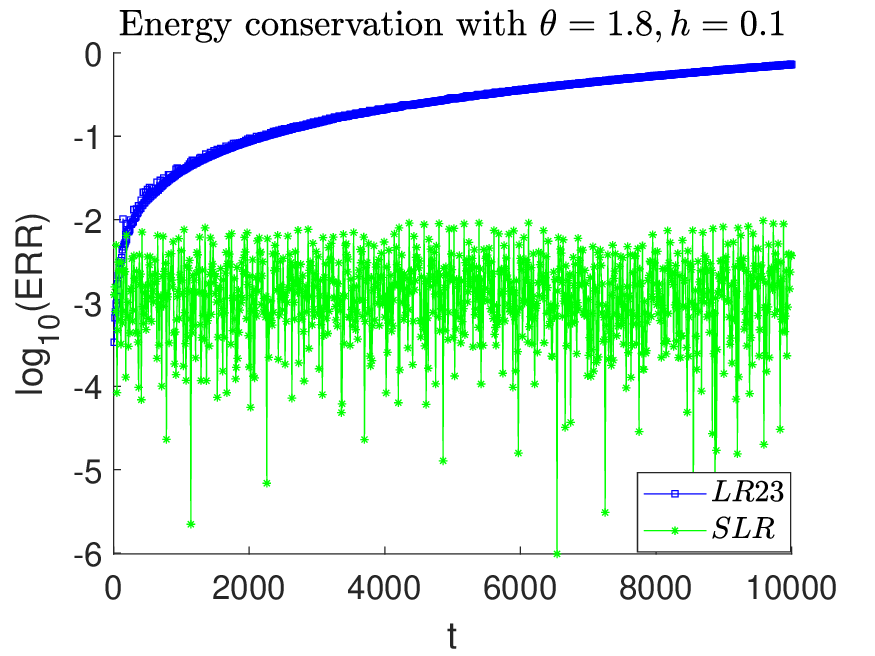}
\caption{Errors of the energy with  initial data $H^{\theta}(\mathbb{T}) \times H^{\theta-1}(\mathbb{T})$ against  time $t$ for different $\theta$ and  $h $.} \label{p3}
\end{figure}
\textbf{Long time conservations.}  Finally,  we display the long time conservations of low-regularity methods LR23 and SLR.
The relative errors of discrete energy (denoted by $ERR$) over a long interval $[0,10000]$
 are shown in Figure \ref{p3} for the system with the  initial data $\varphi_1(x)/4$ and $\varphi_2(x)/4$\footnote{The reason for this choice is that a small initial value is needed in the long time analysis given in Section \ref{sec:4}. }.
According to these numerical results, it is known that the energy is nearly preserved numerically by   our
integrator  SLR  over long times. In contrast, LR23 shows substantial drift in the energy quantity and thus this LR method  does not have long-term performance in the energy conservation. The methods have a similar behaviour of momentum and action conservations and we skip them for brevity.

In the rest parts of this paper, we shall present the  rigorous theoretical analysis for these behaviours.  The error estimates are derived in Section \ref{sec:3} and long-time analysis is  deduced in Section \ref{sec:4}.

\section{Convergence}\label{sec:3}
In this section, we derive the convergence result of the proposed  scheme  \eqref{method} and present its proof.
\begin{mytheo}  \label{symplectic thm} (\textbf{Error estimate}) For the  nonlinear function $f$ of the Klein--Gordon equation \eqref{klein-gordon}, the Lipschitz continuity conditions $\abs{f^{(k)}(w)}\leq C_1$
 are assumed   for $w\in \mathbb{R}$ and $k=1,2,3$.
Under the  regularity condition $(u(0,x), \partial_tu(0,x))\in [H^{1+\frac{d}{4}}(\mathbb{T})\bigcap H^{1}_0(\mathbb{T})]\times H^{\frac{d}{4}}(\mathbb{T}),$
 the numerical result $u_n,v_n$ produced  in Definition \ref{def-se} has the
global error  in the energy space $H^1(\mathbb{T})\times L^2(\mathbb{T})$:
 \begin{equation}\label{error bound}
\begin{aligned}
\max_{0\leq n \leq T/h } \norm{u_n-u(t_n)}_{H^{1}} \leq Ch ^2,\ \ \
\max_{0\leq n \leq T/h } \norm{v_n-v(t_n)}_{L^{2}}\leq Ch ^2,
\end{aligned}
\end{equation}
where $C$ is a positive constant which is independent of the stepsize $h $ and $n$ but depends on $C_1$ and $T$.
\end{mytheo}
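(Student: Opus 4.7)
The plan is to prove \eqref{error bound} by the classical local-error plus stability argument for two-step schemes, adapted to the low-regularity setting $(u_0,v_0)\in H^{1+d/4}\times H^{d/4}$. I would start by inserting the exact solution into \eqref{method} and subtracting the Duhamel identities \eqref{u2add}, \eqref{v2add}; this isolates the local truncation errors $\tau_n^u,\tau_n^v$ as the differences between the true quadrature integrals and their symmetric midpoint approximations. The two ingredients in each truncation error are (i) the replacement of the operator $\sinc((h-\theta)\sqrt{\mathcal{A}})$ or $\cos((h-\theta)\sqrt{\mathcal{A}})$ by its value at $\theta=0$, and (ii) the replacement of $f(u(t_n+\theta))+f(u(t_n-\theta))$ by $2f(u(t_n))$.

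The main obstacle I expect is proving $\norm{\tau_n^u}_{H^1}+\norm{\tau_n^v}_{L^2}=O(h^3)$ under the stated low regularity, since $u_{tt}$ is not controlled and plain Taylor expansion in time is unavailable. For ingredient (i) I would use the operator identities $\cos(\tau\sqrt{\mathcal{A}})-1=-2\sin^2(\tau\sqrt{\mathcal{A}}/2)$ and $\tau\sinc(\tau\sqrt{\mathcal{A}})=\sin(\tau\sqrt{\mathcal{A}})/\sqrt{\mathcal{A}}$ to extract an $O(\theta)$ factor without spending extra derivatives on the factor $f(u(t_n))$, and then exploit the algebra property of $H^{1+d/4}(\mathbb{T})$ together with the Lipschitz bound $|f'|\le C_1$ to keep $f(u(t_n))$ in the required space. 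For ingredient (ii) I would expand $u(t_n\pm\theta)$ via Duhamel to first order in $\theta$, so that the linear-in-$v(t_n)$ contributions cancel in the symmetric sum and one is left with a quadratic-in-$\theta$ remainder; combined with a second-order Taylor expansion of $f$ around $u(t_n)$ and the Sobolev embedding $H^{1+d/4}(\mathbb{T})\hookrightarrow L^\infty(\mathbb{T})$ valid for $d=1,2,3$, the Lipschitz bounds on $f''$ and $f'''$ then give $\norm{f(u(t_n+\theta))+f(u(t_n-\theta))-2f(u(t_n))}_{H^1}=O(\theta^2)$. Integrating over $\theta\in[0,h]$ yields the desired $O(h^3)$ bound on the local error.

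The stability step is routine. With $e_n^u=u_n-u(t_n)$ and $e_n^v=v_n-v(t_n)$, subtracting \eqref{method} from \eqref{u2add}--\eqref{v2add} yields a linear two-step recursion driven by $\tau_n^u,\tau_n^v$ together with the nonlinear defects $h^2\sinc(h\sqrt{\mathcal{A}})\bigl(f(u_n)-f(u(t_n))\bigr)$ and $h\bigl(\cos(h\sqrt{\mathcal{A}})+\sinc(h\sqrt{\mathcal{A}})\bigr)\bigl(f(u_n)-f(u(t_n))\bigr)$. Using the uniform bounds $\norm{\cos(h\sqrt{\mathcal{A}})}_{H^s\to H^s},\norm{\sinc(h\sqrt{\mathcal{A}})}_{H^s\to H^s}\le 1$ and $\norm{h\mathcal{A}\sinc(h\sqrt{\mathcal{A}})}_{H^1\to L^2}\le C$, the Lipschitz bound on $f'$, and a bootstrap on $\norm{u_n}_{L^\infty}$ (seeded by $H^{1+d/4}\hookrightarrow L^\infty$ and the continuous solution bound), a discrete Gronwall argument applied to $E_n=\norm{e_n^u}_{H^1}+\norm{e_n^v}_{L^2}$ then gives $E_n\le Ch^2$ for all $t_n\le T$.

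Finally, the starting values $u_1,v_1$ are produced by the one-step LR method \eqref{start value} of \cite{LI23}, which under the same regularity assumption delivers an $O(h^3)$ local error in $H^1\times L^2$; hence the initialisation does not degrade the global second-order rate, and the proof closes. The genuinely new work is therefore concentrated in the low-regularity local truncation estimate above, with everything else being a standard Lady-Windermere's-fan argument for symmetric two-step cosine-type schemes.
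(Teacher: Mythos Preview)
Your proposal has two genuine gaps, one in each of the two steps.

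\textbf{Local error.} The decomposition into ingredients (i) and (ii) does not close under the assumed regularity. In ingredient (ii) you expand $u(t_n\pm\theta)$ by Duhamel and note that the $\pm\theta\sinc(\theta\sqrt{\mathcal A})v(t_n)$ terms cancel in the symmetric sum. But what survives is \emph{not} an $O(\theta^2)$ remainder: the term $\bigl(\cos(\theta\sqrt{\mathcal A})-1\bigr)u(t_n)$ appears in $f'(u(t_n))\bigl[u(t_n+\theta)+u(t_n-\theta)-2u(t_n)\bigr]$, and even after gaining one derivative from the outer $\sinc$ kernel you would still need $\|(\cos(\theta\sqrt{\mathcal A})-1)u(t_n)\|_{L^2}=O(\theta^2)$, which requires $u\in H^2$, strictly more than $H^{1+d/4}$ for $d\le 3$. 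The paper avoids this by expanding $f(u(t_n\pm s))$ not around $u(t_n)$ but around the \emph{linear flow} $\alpha^{\!\top}(\pm s)U(t_n)$, and then computing the second $s$-derivative of $f(\alpha^{\!\top}(s)U(t_n))$ together with $\mathcal{A}f$; the key identity is
\[
\mathcal{A}f(w)+\partial_{s}^{2}\,f\!\bigl(\alpha^{\!\top}(s)U\bigr)
= f''(w)\bigl[(\gamma^{\!\top}U)^2-(\nabla w)^2\bigr]+\rho\bigl(f(w)-f'(w)w\bigr),
\]
which trades the forbidden $\mathcal{A}u$ for products of first derivatives. Those products are then estimated in $L^2$ via $\|\,\cdot\,\|_{L^4}^2$ and the embedding $H^{d/4}\hookrightarrow L^4$, which is exactly where the exponent $1+d/4$ enters. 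Your sketch contains no mechanism for this cancellation.

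\textbf{Stability and accumulation.} The statement that ``the stability step is routine'' via a discrete Gronwall on $E_n=\|e_n^u\|_{H^1}+\|e_n^v\|_{L^2}$ is not correct for this two-step scheme. Solving the error recursion gives $e_{n+1}^u=\sum_{k}W_{n-k}(\zeta_k^u+\eta_k^u)$ with $W_m=\sin((m{+}1)h\sqrt{\mathcal A})/\sin(h\sqrt{\mathcal A})$, and $\|W_m\|_{H^1\to H^1}\le m{+}1$ grows linearly. Feeding in $\|\zeta_k^u\|_{H^1}=O(h^3)$ yields only $\sum_k(n{-}k)h^3=O(h)$ after Gronwall, and indeed the paper first obtains exactly this coarse $O(h)$ bound. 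To reach $O(h^2)$ one must perform an Abel (partial) summation on $\sum_k W_{n-k}\zeta_k^u$ and show that the \emph{differences} $\zeta_{k+1}^u-\zeta_k^u$ are $O(h^4)$; this in turn relies on the explicit structure of the local remainder $R_1(t_k,\pm h)$ and the $C^2$-in-time assumption on the solution. A plain Lady-Windermere fan does not deliver the optimal rate here, and your proposal does not contain this summation-by-parts step.
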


To prove the result, we need to study the local truncation errors in a very  rigorously way and make full use of  the structure in NKGE to reduce regularity. Moreover, for a dominant  term  in the global errors,
we take a new way to derive optimal error bound on it.
With these deductions, this theorem is finally proved. 
In the analysis, Lipschitz conditions for the nonlinearity are required  which  is necessary to make the proof go smoothly.

\begin{proof}
To estimate the error $e_n^{u}:=u(t_{n})-u_n, e_n^{v}:=v(t_{n})-v_n$ of the scheme \eqref{method}
for $0\leq n\leq  T /h $,
we shall first consider the local truncation errors which are defined by inserting
the solution of  \eqref{klein-gordon} into \eqref{method}:
 \begin{equation}\label{local errors}
\begin{aligned}
\zeta_n^{u}:=&u(t_{n+1})-2\cos(h \sqrt{\mathcal{A}}) u(t_{n})+u(t_{n-1})-h ^2\sinc(  h \sqrt{\mathcal{A}}) f\big( u(t_{n})\big),\\
\zeta_n^{v}:=&v(t_{n+1})+2h \mathcal{A} \sinc( h \sqrt{\mathcal{A}})u(t_{n})-v(t_{n-1})-h \big(\cos(h \sqrt{\mathcal{A}}) +\sinc(  h \sqrt{\mathcal{A}})\big) f\big( u(t_{n})\big),
\end{aligned}
\end{equation}
where $n=1,2,\ldots, T/h -1$.
For $n = 0$,   the local truncation errors are determined by inserting
the solution of  \eqref{klein-gordon} into the starting value $u_1, v_1$ \eqref{start value}
and the analysis of \cite{LI23} shows
    \begin{equation}\label{F bound10}
\begin{aligned}
\norm{\zeta_0^{u}}_{H^{1}}+ \norm{\zeta_0^{v}} _{L^{2}}
 \lesssim h ^3 \max_{t\in[0,t_1]}\Big(\norm{u(t)}^2_{H^{1+\frac{d}{4}}}+\norm{v(t)}^2_{H^{\frac{d}{4}}}\Big).
\end{aligned}
\end{equation}
Then subtracting the corresponding local error terms \eqref{local errors} from the scheme  \eqref{method}, we find
 \begin{equation}\label{equ errors}
\begin{aligned}
&e_{n+1}^{u}-2\cos(h \sqrt{\mathcal{A}})e_n^{u}+e_{n-1}^{u}=\zeta_n^{u}+\eta_n^{u},\ \
e_{n+1}^{v}+2h \mathcal{A} \sinc( h \sqrt{\mathcal{A}})e_n^{u}-e_{n-1}^{v}=\zeta_n^{v}+\eta_n^{v},
\end{aligned}
\end{equation}
where $n=1,2,\ldots,\frac{T}{h }-1$ and the following notations are used
 \begin{equation}\label{sta errors}
\begin{aligned}
&\eta_n^{u}:=h ^2\sinc(  h \sqrt{\mathcal{A}}) \Big(f( u_n)-f\big( u(t_{n})\big)\Big),\ \
 \eta_n^{v}:=h \big(\cos(h \sqrt{\mathcal{A}}) +\sinc(  h \sqrt{\mathcal{A}})\big) \Big(f( u_n)-f\big( u(t_{n})\big)\Big).
\end{aligned}
\end{equation}
For the starting value, by the choice of $u_0, v_0$ and $u_1, v_1$, we simply have
$e_0^{u}=e_0^{v}=0$ and $e_1^{u}=\zeta_0^{u}, e_1^{v}=\zeta_0^{v}.
$


\subsection{Estimate of the local errors $\zeta_n^{u}, \zeta_n^{v}$}\label{sec:31}
According to the Duhamel’s formulae \eqref{u2add}, the local errors $\zeta_n^{u}, \zeta_n^{v}$ for $n\geq1$ defined in  \eqref{local errors} can be written as
 \begin{equation}\label{local errors 1}
\begin{aligned}
\zeta_n^{u}=&\int_0^{h  } (h -s)\sinc( (h -s)\sqrt{\mathcal{A}}) \Big(f\big( u(t_n+s)\big)+f\big( u(t_n-s)\big)\Big)ds-h ^2\sinc(  h \sqrt{\mathcal{A}}) f\big( u(t_{n})\big),\\
\zeta_n^{v}=&\int_0^{h  }  \cos( (h -s)\sqrt{\mathcal{A}}) \Big(f\big( u(t_n+s)\big)+f\big( u(t_n-s)\big)\Big)ds-h \big(\cos(h \sqrt{\mathcal{A}}) +\sinc(  h \sqrt{\mathcal{A}})\big) f\big( u(t_{n})\big).
\end{aligned}
\end{equation}
In what follows, we rigorously derive the estimate of  $\zeta_n^{u}$ and with the same arguments, the boundedness of $\zeta_n^{v}$ can be deduced.

Utilizing the Duhamel’s formula \eqref{Duhamel}, we get the expression of $u(t_n\pm s)$.
Inserting the result into $f\big(u(t_n\pm s)\big)$ and taking Taylor expansion at $\alpha^{\intercal}(\pm s)U(t_n)$ (we denote $U(t_n)=(
             u(t_n),
             v(t_n) )^{\intercal}$  form now on) yields
\begin{equation}\label{funs}
\begin{aligned}
f\big(u(t_n\pm s)&\big)=f\Big(\alpha^{\intercal}(\pm s)U(t_n)\Big)+f'\Big(\alpha^{\intercal}(\pm s)U(t_n)\Big)
\Big(u(t_n\pm s)-\alpha^{\intercal}(\pm s)U(t_n)\Big)\\
&+R_{f''}(t_n,\pm s)\Big(u(t_n\pm s)-\alpha^{\intercal}(\pm s)U(t_n),u(t_n\pm s)-\alpha^{\intercal}(\pm s)U(t_n)\Big),
\end{aligned}
\end{equation}
where
 $\begin{aligned}&R_{f''}(t_n,\pm s)
:=\int_0^1 \int_0^1\theta f''\Big( (1-\xi)\alpha^{\intercal}(\pm s)U(t_n)+
\xi(1-\theta) \alpha^{\intercal}(\pm s)U(t_n)+\theta u(t_n\pm s)\Big)d\xi d\theta.\end{aligned}$
Replacing $u(t_n\pm s)$ appeared in the right hand side of \eqref{funs} by the Duhamel’s formula \eqref{Duhamel}, we obtain
\begin{equation}\label{funs-new}
\begin{aligned}
&f\big(u(t_n\pm s)\big)=\underbrace{f\Big(\alpha^{\intercal}(\pm s)U(t_n)\Big)}_{=:P_{f}(t_n,\pm s)}
+\underbrace{f'\Big(\alpha^{\intercal}(\pm s)U(t_n)\Big)
 I(t_n,\pm s)}_{=:P_{f'}(t_n,\pm s)}+\widetilde{R}_{f''}(t_n,\pm s),
\end{aligned}
\end{equation}
where $I(t_n,\pm s):=\int_0^{\pm s }  \frac{\sin( (\pm s-\theta)\sqrt{\mathcal{A}})}{\sqrt{\mathcal{A}}} f\big( u(t_n+\theta)\big)d\theta$ and $$\widetilde{R}_{f''}(t_n,\pm s):
= R_{f''}(t_n,\pm s) \big(I(t_n,\pm s),  I(t_n,\pm s)\big).$$
Inserting \eqref{funs-new} into the first formula of \eqref{local errors 1} yields
 \begin{equation}\label{local errors 1-new}
\begin{aligned}
\zeta_n^{u}=&\underbrace{\int_0^{h  } (h -s)\sinc( (h -s)\sqrt{\mathcal{A}}) \Big(P_{f}(t_n,  s)+P_{f}(t_n,- s)\Big)ds
-h ^2\sinc(  h \sqrt{\mathcal{A}}) f\big( u(t_{n})\big)}_{=:\textmd{Part I}^{u}}\\
&+\underbrace{\int_0^{h  } (h -s)\sinc( (h -s)\sqrt{\mathcal{A}}) \Big(P_{f'}(t_n,  s)+P_{f'}(t_n,- s)\Big)ds}_{=:\textmd{Part II}^{u}}
\\
&+\underbrace{\int_0^{h  } (h -s)\sinc( (h -s)\sqrt{\mathcal{A}}) \Big(\widetilde{R}_{f''}(t_n,  s)+\widetilde{R}_{f''}(t_n,- s)\Big)ds}_{=:\textmd{Part III}^{u}}.
\end{aligned}
\end{equation}

 $\bullet$ \textbf{Approximation to Part I$^u$.}
 For the approximation to Part I$^u$ appeared in \eqref{local errors 1-new}, we extract a key part from Part I$^u$ and denote it by $\mathcal{I}(t_n,h ):= \int_0^{h  } (h -s)\sinc( (h -s)\sqrt{\mathcal{A}})  P_{f}(t_n,  s) ds$.
By introducing the function $F(t_n+s):=\beta(-s) f\big( \alpha^{\intercal}(s) U(t_n)\big)$, this part can be formulated as
 \begin{equation}\label{P1-new1}
\begin{aligned}
  \mathcal{I}(t_n,h )=&\int_0^{h  }  \alpha^{\intercal}(h ) \beta(-s) f\big( \alpha^{\intercal}(s) U(t_n)\big) ds=\int_0^{h  }  \alpha^{\intercal}(h ) F(t_n+s) ds\\
 =&\int_0^{h  }  \alpha^{\intercal}(h ) F(t_n) ds+\int_0^{h  }  \alpha^{\intercal}(h )
 F'(t_n+\zeta)(h -\zeta) d\zeta.
\end{aligned}
\end{equation}
Considering the fact that
$
  \alpha^{\intercal}(h )=\alpha^{\intercal}(h -s) M(s)\  \textmd{with}\ M(s):=\left(
                                 \begin{array}{cc}
                                  \cos(s\sqrt{\mathcal{A}}) &  s \sinc( s\sqrt{\mathcal{A}}) \\
                                 - \sqrt{\mathcal{A}} \sin( s\sqrt{\mathcal{A}}) &  \cos(s\sqrt{\mathcal{A}})  \\
                                 \end{array}
                               \right)
$
for any $s\in\mathbb{R},$
\eqref{P1-new1} can be expressed as
 \begin{equation}\label{P1-new2}
\begin{aligned}
 &\mathcal{I}(t_n,h )=\int_0^{h  }  \alpha^{\intercal}(h ) F(t_n) ds+\int_0^{h  }  (h -s) \alpha^{\intercal}(h -2s)
 M(2s)F'(t_n+s) ds\\
 =&\int_0^{h  }  \alpha^{\intercal}(h ) F(t_n) ds+\int_0^{h  }  (h -s) \alpha^{\intercal}(h -2s)\Big(
 M(0)F'(t_n) +\int_0^{s}
 \frac{d  \big(M(2\zeta)F'(t_n+\zeta)\big)}{d\zeta}  d\zeta\Big)ds.
\end{aligned}
\end{equation}
We first compute $F'(t_n+\zeta)$:
 \begin{equation}\label{DF}
\begin{aligned}
&F'(t_n+\zeta)=\frac{d}{d\zeta}  \beta(-\zeta) f\big( \alpha^{\intercal}(\zeta) U(t_n)\big)
    =M(-\zeta)\big(  - f\big( \alpha^{\intercal}(\zeta) U(t_n)\big),
                                   \frac{d}{d\zeta}  f\big( \alpha^{\intercal}(\zeta) U(t_n)\big)
                              \big)^{\intercal}.
\end{aligned}
\end{equation}
Then it is obtained that
$ \frac{d  }{d\zeta} M(2\zeta)F'(t_n+\zeta) =\frac{d  }{d\zeta} M(\zeta)\left(
                                \begin{array}{c}
                                  - f\big( \alpha^{\intercal}(\zeta) U(t_n)\big) \\
                                   \frac{d}{d\zeta}  f\big( \alpha^{\intercal}(\zeta) U(t_n)\big)  \\
                                \end{array}
                              \right)
                             =\beta(\zeta)\Upsilon(t_n,\zeta)
$
with the notation \begin{equation}\label{UPS}
\begin{aligned}&\Upsilon(t_n,\zeta):=   \mathcal{A} f\big( \alpha^{\intercal}(\zeta) U(t_n)\big)
                                 +\frac{d^2}{d\zeta^2}  f\big( \alpha^{\intercal}(\zeta) U(t_n)\big) \\
                                                 =&   \mathcal{A} f\big( \alpha^{\intercal}(\zeta) U(t_n)\big)
                                 + \frac{d }{d\zeta } \Big( f'\big( \alpha^{\intercal}(\zeta) U(t_n)\big) \gamma^{\intercal}(\zeta) U(t_n)\Big)\\
                                 =&   \mathcal{A} f\big( \alpha^{\intercal}(\zeta) U(t_n)\big)
                                 +   f''\big( \alpha^{\intercal}(\zeta) U(t_n)\big)\big(\gamma^{\intercal}(\zeta) U(t_n),\gamma^{\intercal}(\zeta) U(t_n) \big)- f'\big( \alpha^{\intercal}(\zeta) U(t_n)\big)\mathcal{A}\alpha^{\intercal}(\zeta) U(t_n)\\
                                             =&   f'\big( \alpha^{\intercal}(\zeta) U(t_n)\big)\mathcal{A}\alpha^{\intercal}(\zeta) U(t_n)-   f''\big( \alpha^{\intercal}(\zeta) U(t_n)\big)\big(\nabla \alpha^{\intercal}(\zeta) U(t_n),\nabla \alpha^{\intercal}(\zeta) U(t_n) \big)\\
                                 &+   f''\big( \alpha^{\intercal}(\zeta) U(t_n)\big)\big(\gamma^{\intercal}(\zeta) U(t_n),\gamma^{\intercal}(\zeta) U(t_n) \big)- f'\big( \alpha^{\intercal}(\zeta) U(t_n)\big)\mathcal{A}\alpha^{\intercal}(\zeta) U(t_n)\\
                                     =&f''\big( \alpha^{\intercal}(\zeta) U(t_n)\big)\big(\gamma^{\intercal}(\zeta) U(t_n),\gamma^{\intercal}(\zeta) U(t_n) \big)-  f''\big( \alpha^{\intercal}(\zeta) U(t_n)\big)\big(\nabla \alpha^{\intercal}(\zeta) U(t_n),\nabla \alpha^{\intercal}(\zeta) U(t_n) \big). \end{aligned}
\end{equation}
Here we have used the properties $\alpha'(\zeta)=\gamma(\zeta)$ and  $\gamma'(\zeta)= \mathcal{A}\alpha(\zeta)$.
It is noted that the result of $\Upsilon(t_n,\zeta)$ is the key point  for reducing the regularity since it changes $ \mathcal{A}$ into $\nabla $ in the expression.
Now the result of  \eqref{P1-new2} can be simplified as \begin{equation*}
\begin{aligned}
  &\mathcal{I}(t_n,h )=\int_0^{h  }  \alpha^{\intercal}(h )  ds F(t_n) +\int_0^{h  }  (h -s) \alpha^{\intercal}(h -2s)
 M(0)ds F'(t_n) +R_1(t_n,h )\\
                              =&h ^2/2\sinc(  h \sqrt{\mathcal{A}}) f\big( u(t_{n})\big)+\frac{ \sin (  h \sqrt{\mathcal{A}}) -  h \sqrt{\mathcal{A}}\cos(  h \sqrt{\mathcal{A}})}{2 \sqrt{\mathcal{A}}^3} f'\big( u(t_n)\big) v(t_n) +R_1(t_n,h ),\end{aligned}
\end{equation*}
where we use the  formula \eqref{DF} to get $F'(t_n)$   and consider a new notation $$R_1(t_n,h ):=\int_0^{h  }  (h -s) \alpha^{\intercal}(h -2s)\int_0^{s}
\beta(\zeta)\Upsilon(t_n,\zeta)  d\zeta ds.$$
Therefore,  the  Part I$^u$  of   \eqref{local errors 1-new}     becomes
$
\textmd{Part I}^u=\mathcal{I}(t_n,h )+\mathcal{I}(t_n,-h )
-h ^2\sinc(  h \sqrt{\mathcal{A}}) f\big( u(t_{n})\big)
=R_1(t_n,h )+R_1(t_n,-h ).
$
Then concerning  the expression \eqref{notations w} of $ \alpha, \beta$, we get that
$$
\norm{\textmd{Part I}^u}_{H^{1}}\lesssim   h ^3  \max_{s \in[0,h ],\zeta\in[-s,s]}\norm{\frac{\Upsilon(t_n,\zeta)}{\sqrt{\mathcal{A}}} }_{H^{1}}
\lesssim  h ^3  \max_{\zeta \in[-h ,h ]} \norm{ \Upsilon(t_n,\zeta)}_{L^{2}}.$$
According to the initial date, we know that $(u(t,x), \partial_tu(t,x))\in [H^{1+\frac{d}{4}}(\mathbb{T})\bigcap H^{1}_0(\mathbb{T})]\times H^{\frac{d}{4}}(\mathbb{T}).$
From the properties of  the semigroup $M(s)$, it follows that $\alpha^{\intercal}(\zeta) U(t_n) \in H^{1+\frac{d}{4}}$ and $\gamma^{\intercal}(\zeta) U(t_n) \in H^{\frac{d}{4}}$. Therefore, using the expression \eqref{UPS} of $\Upsilon$, one has
  \begin{equation}\label{Part I bound2}
\begin{aligned}
&\norm{\textmd{Part I}^{u}}_{H^{1}}  \lesssim  h ^3  \max_{\zeta \in[-h ,h ]} \norm{ \Upsilon(t_n,\zeta)}_{L^{2}} \\
 \lesssim& h ^3
\max_{\zeta\in[-h ,h ]}\Big( \norm{ \gamma^{\intercal}(\zeta) U(t_n)}^2_{L^{4}}+
 \norm{\nabla \alpha^{\intercal}(\zeta) U(t_n)}^2_{L^{4}}+
 \norm{   \alpha^{\intercal}(\zeta) U(t_n)}_{L^{2}}\Big) \\
 \lesssim &h ^3
\max_{\zeta\in[-h ,h ]}\Big( \norm{ \gamma^{\intercal}(\zeta) U(t_n)}^2_{H^{\frac{d}{4}}}+
 \norm{ \alpha^{\intercal}(\zeta) U(t_n)}^2_{H^{1+\frac{d}{4}}}+
 \norm{   \alpha^{\intercal}(\zeta) U(t_n)}_{L^{2}}\Big)\\
 \lesssim  &h ^3\max_{t\in[t_{n-1},t_{n+1}]}\Big(\norm{u(t)}^2_{H^{1+\frac{d}{4}}}+\norm{v(t)}^2_{H^{\frac{d}{4}}}\Big),
\end{aligned}
\end{equation}
where we utilize the  Sobolev embeddings  $H^{\frac{d}{4}} \hookrightarrow L^{4}$ and $H^{1+\frac{d}{4}} \hookrightarrow W^{1,4}.$


$\bullet$ \textbf{Approximation to Part II$^{u}$.}
 Based on the scheme of Part II$^{u}$ in \eqref{local errors 1-new} and $P_{f'}(t_n,  \pm s)$ in \eqref{funs-new}, it follows that
   \begin{equation*}
\begin{aligned}
&\norm{\textmd{Part II}^{u}}_{H^{1}}\lesssim \norm{ \int_0^{h  } (h -s)\sinc( (h -s)\sqrt{\mathcal{A}}) \Big(P_{f'}(t_n,  s)+P_{f'}(t_n,- s)\Big)ds} _{H^{1}}\\
\lesssim & \sum_{m=s,-s}  \int_0^{h  }\abs{h -m}
 \int_0^{  m }\norm{ (  m-\theta)\sinc( (m-\theta)\sqrt{\mathcal{A}}) f\big( u(t_n+\theta)\big)} _{H^{1}}d\theta dm\\
 \lesssim &  h ^4  \max_{\zeta\in[-h ,h ]} \norm{f\big( u(t_n+\zeta)\big)} _{H^{1}} \lesssim   h ^4  \max_{t\in[t_{n-1},t_{n+1}]} \norm{  u(t) } _{H^{1}}.
\end{aligned}
\end{equation*}

 $\bullet$ \textbf{Approximation to Part III$^{u}$.}
 Finally, we consider the bounds of Part III$^{u}$ presented in \eqref{local errors 1-new}. In the light of the result  $\widetilde{R}_{f''}$, it is easy to see that
$\norm{\textmd{Part III}^{u}}_{H^{1}}\lesssim   h ^6  \max_{t\in[t_{n-1},t_{n+1}]} \norm{  u(t) }^2 _{H^{1}}.
$


With the same deductions,  we can get the result of   $\norm{\zeta_n^{v}} _{L^{2}}$ and in summary,   the local errors $\zeta_n^{u}, \zeta_n^{v}$  are bounded as $
\norm{\zeta_n^{u}}_{H^{1}} +
 \norm{\zeta_n^{v}} _{L^{2}}
 \lesssim h ^3 \max_{t\in[t_{n-1},t_{n+1}]}\Big(\norm{u(t)}^2_{H^{1+\frac{d}{4}}}+\norm{v(t)}^2_{H^{\frac{d}{4}}}\Big).
$

\subsection{Proof of  global errors}\label{sec:32}
We are now in the position to derive the global errors of our proposed integrator.
With the analysis of trigonometric integrators presented  in \cite{Grimm05,Lubich99,Zhao23}, we get the  recurrence relation of $e_{n}^{u}$ from the first equation of \eqref{equ errors}
 \begin{equation}\label{equ u reerrors}
\begin{aligned}
&e_{n+1}^{u}=W_{n}e_1^{u}-W_{n-1}e_0^{u}+ \sum_{k=1}^{n}W_{n-k}(\zeta_{k}^{u}+\eta_{k}^{u}),
\ \ \ n=1,2,\ldots,\frac{T}{h }-1,
\end{aligned}
\end{equation}
where $W_{n}$ is an operator defined analytically  $W_{n}:=\frac{\sin\big( (n+1) h \sqrt{\mathcal{A}}\big)}{\sin( h \sqrt{\mathcal{A}})}.$
Then for  the $e_{n+1}^{v}$, we deduce from the second equation of \eqref{equ errors} that
 \begin{equation}\label{equ v reerrors}
\begin{aligned}
&e_{n+1}^{v}=e_{n-1}^{v}+\zeta_n^{v}+\eta_n^{v}- 2h \mathcal{A} \sinc( h \sqrt{\mathcal{A}})\Big(W_{n-1}e_1^{u}+ \sum_{k=1}^{n-1}W_{n-k-1}(\zeta_k^{u}+\eta_k^{u})\Big)\\
=&e_{n-1}^{v}+\zeta_n^{v}+\eta_n^{v}- 2 \sqrt{\mathcal{A}}\Big(\sin( nh \sqrt{\mathcal{A}})e_1^{u}+ \sum_{k=1}^{n-1} \sin( (n-k)h \sqrt{\mathcal{A}})(\zeta_k^{u}+\eta_k^{u})\Big),
\end{aligned}
\end{equation}
where we use the observation $\sin( h \sqrt{\mathcal{A}})W_{n-1}=\sin( nh \sqrt{\mathcal{A}})$ in the last equation.
We carry out an induction proof on the global error \eqref{error bound} and the following boundedness of the numerical
solution
 \begin{equation}\label{boundedness  num}
\begin{aligned}\norm{u_n}_{H^{1}} \leq 1 +\norm{u}_{L^{\infty}([0,T];H^{1})},\ \ \norm{v_n}_{L^{2}} \leq 1 +\norm{v}_{L^{\infty}([0,T];L^{2})}.\end{aligned}
\end{equation}

 Firstly, the statements \eqref{error bound} and \eqref{boundedness  num} are clearly true since
$e_{0}^{u}=e_{0}^{v}=0$. For $n =1$,
by the choice of the initial value $u_1,v_1$ from  \cite{LI23}, we have
$
\norm{e_1^{u}}_{H^{1}}\lesssim     h ^3 ,   \norm{e_1^{v}} _{L^{2}} \lesssim h ^3$ and thus \eqref{error bound}  is true for $n=1$. These results give further that
  there exists some constant $h _0 > 0$ such that when $h < h _0$, \eqref{boundedness  num} holds for $n=1$.
  Now assume
that \eqref{error bound} and \eqref{boundedness  num}  hold for $1 \leq n\leq m < T /h $, and we are going to prove the validity at $m + 1$.

Taking the $H^{1}$
-norm on both sides of  \eqref{equ u reerrors}, we get
$$\norm{e_{n+1}^{u}}_{H^{1}}\leq  (n+1)\norm{e_1^{u}}_{H^{1}}+ \sum_{k=1}^{n} (n-k+1)(\norm{\zeta_k^{u}}_{H^{1}}+\norm{\eta_k^{u}}_{H^{1}}),
$$
where we have used the property $\norm{W_{n}g}_{H^{r}}\leq (n+1) \norm{g}_{H^{r}}$   for some function $g(x)$ on $\mathbb{T}$.
Noticing the scheme of  \eqref{sta errors}, we find that $\eta_n^{u}, \eta_n^{v}$ are bounded by
$\norm{\eta_n^{u}}_{H^{1}}\lesssim h ^2 \norm{e_n^{u}}_{H^{1}},
\norm{\eta_n^{v}}_{H^{1}}\lesssim h  \norm{e_n^{u}}_{H^{1}}.$
Based on them and those local errors shown in the previous subsection, for $n=m$ we obtain
$$\norm{e_{m+1}^{u}}_{H^{1}}\lesssim  (m+1)h ^3+ \sum_{k=1}^{m} (m-k+1)(h ^3+h ^2\norm{e_k^{u}}_{H^{1}})
\lesssim  h + h   \sum_{k=1}^{m}  \norm{e_k^{u}}_{H^{1}},
$$
which by Gronwall’s inequality shows a  coarse result $\norm{e_{m+1}^{u}}_{H^{1}}\lesssim h.$
Thus   when $h $ is small enough,
\eqref{boundedness  num} holds for $u^{n}$ with $n = m + 1$.

To get  an optimal convergence of $e_{m+1}^{u}$,   we deal with \eqref{equ u reerrors} by
 \begin{equation}\label{equ u reerrors-12}
\begin{aligned}
&\norm{e_{m+1}^{u}}_{H^{1}}\leq  (m+1)\norm{e_1^{u}}_{H^{1}}+ \norm{\sum_{k=1}^{m} W_{m-k}\zeta_k^{u}}_{H^{1}}+\sum_{k=1}^{m} (m-k+1)\norm{\eta_k^{u}}_{H^{1}},
\end{aligned}
\end{equation} and deduce a fine estimate of $\sum_{k=1}^{m} W_{m-k}\zeta_k^{u}$.
This part can be rewritten by   partial summation:
$$
\sum_{k=1}^{m}W_{m-k} \zeta_k^{u}=\sum_{j=1}^{m}W_{j} \zeta_{m+1-j}^{u}
=\sum_{j=1}^{m}\frac{ \sin( jh \sqrt{\mathcal{A}}) }{\sin( h \sqrt{\mathcal{A}})} \zeta_{m+1-j}^{u}
=E_{m}\zeta_{1}^{u}+\sum_{j=1}^{m-1}E_j\big(\zeta_{m-j}^{u}-\zeta_{m+1-j}^{u}\big),
$$
where $E_{j}:=\sum_{k=1}^{j}\frac{\sin( kh \sqrt{\mathcal{A}})}{\sin( h \sqrt{\mathcal{A}})}=\frac{\sin(  \frac{j+1}{2}h \sqrt{\mathcal{A}})
\sin(  \frac{j}{2}h \sqrt{\mathcal{A}}) }{\sin(  \frac{1}{2}h \sqrt{\mathcal{A}}) \sin( h \sqrt{\mathcal{A}})}.$
Meanwhile, the dominating term  in the bound of $\zeta_{m-j}^{u}$ comes from  Part I$^u$ appeared in \eqref{local errors 1-new} since the others are at least $\mathcal{O}(h ^4)$. Therefore, we obtain
 \begin{equation}\label{p s-1}
\begin{aligned}
 &\norm{\sum_{k=1}^{m}W_{m-k} \zeta_k^{u}}_{H^1}\lesssim  \norm{E_{m}\zeta_{1}^{u}}_{H^1}+
 \sum_{j=1}^{m-1} \norm{E_j\zeta_{m-j+1}^{u}-E_j\zeta_{m-j}^{u}}_{H^1}
 \lesssim \norm{E_{m}(R_1(t_{1},h )+R_1(t_{1},-h ))}_{H^1}\\+&
 \sum_{j=1}^{m-1} \norm{E_j R_1(t_{m-j+1},h )-E_j R_1(t_{m-j},h )}_{H^1} + \sum_{j=1}^{m-1} \norm{E_j R_1(t_{m-j+1},-h ) -E_j R_1(t_{m-j},-h )}_{H^1}+C h ^2.
\end{aligned}
\end{equation}
We now consider
 \begin{equation*}
\begin{aligned}
 & \norm{E_j R_1(t_{m-j+1},h )-E_j R_1(t_{m-j},h )}_{H^1}\\
\lesssim& h ^3  \max_{s,\zeta \in[0,h ]}\norm{\Big( \frac{ \sin(\zeta \sqrt{\mathcal{A}})}{\sin(  \frac{1}{2}h \sqrt{\mathcal{A}}) } +\frac{  \sin( (h -2s)\sqrt{\mathcal{A}})}{\sin(  \frac{1}{2}h \sqrt{\mathcal{A}})  }  \Big)\sin(  \frac{j+1}{2}h \sqrt{\mathcal{A}})
\sin(  \frac{j}{2}h \sqrt{\mathcal{A}})  }_{L^{2}}\\& \max_{s,\zeta \in[0,h ]}\norm{ \frac{1}{\sin( h \sqrt{\mathcal{A}})}\big(  \Upsilon(t_{m-j+1},\zeta)- \Upsilon(t_{m-j},\zeta) \big)}_{L^{2}}\\
\lesssim&   h ^3 \Big( \max_{\zeta \in[0,h ]}\abs{\frac{\zeta}{h /2}}+ \max_{s \in[0,h ]}\abs{\frac{h -2s}{h /2}}\Big) \norm{\frac{ \Upsilon(t_{m-j+1},\zeta)- \Upsilon(t_{m-j},\zeta) }{  h \sqrt{\mathcal{A}} }   }_{L^{2}}\\
\lesssim&    h ^3  \max_{\zeta \in[0,h ]}\norm{ \frac{ \Upsilon(t_{m-j+1},\zeta)- \Upsilon(t_{m-j},\zeta) }{  h \sqrt{\mathcal{A}} }  }_{L^{2}}.
\end{aligned}
\end{equation*}
Keeping the expression of $\Upsilon$ \eqref{UPS} in mind,
it is deduced that
\begin{equation*}
\begin{aligned}
  &\Upsilon(t_{m-j},\zeta)- \Upsilon(t_{m-j-1},\zeta)  \\
=&
f''\big( \alpha^{\intercal}(\zeta) U(t_{m-j})\big)\big(\gamma^{\intercal}(\zeta) U(t_{m-j}),\gamma^{\intercal}(\zeta) U(t_{m-j}) \big)\\&-  f''\big( \alpha^{\intercal}(\zeta) U(t_{m-j})\big)\big(\nabla \alpha^{\intercal}(\zeta) U(t_{m-j}),\nabla \alpha^{\intercal}(\zeta) U(t_{m-j}) \big)\\
&-f''\big( \alpha^{\intercal}(\zeta) U(t_{m-j-1})\big)\big(\gamma^{\intercal}(\zeta) U(t_{m-j-1}),\gamma^{\intercal}(\zeta) U(t_{m-j-1}) \big)\\&+  f''\big( \alpha^{\intercal}(\zeta) U(t_{m-j-1})\big)\big(\nabla \alpha^{\intercal}(\zeta) U(t_{m-j-1}),\nabla \alpha^{\intercal}(\zeta) U(t_{m-j-1}) \big).
\end{aligned}
\end{equation*}
Therefore, it follows that
\begin{equation*}
\begin{aligned}
&\norm{  \Upsilon(t_{m-j},\zeta)- \Upsilon(t_{m-j-1},\zeta) }_{L^{2}}\\
  \lesssim &   \norm{\frac{ \gamma^{\intercal}(\zeta)}{ h \sqrt{\mathcal{A}}} \big(U(t_{m-j})-U(t_{m-j-1})\big)}^2_{L^{4}}+
 \norm{\frac{\nabla \alpha^{\intercal}(\zeta)}{h \sqrt{\mathcal{A}}}\big(U(t_{m-j})-U(t_{m-j-1})\big)}^2_{L^{4}}\\&+
 \norm{  \frac{ \alpha^{\intercal}(\zeta) }{h \sqrt{\mathcal{A}}}\big(U(t_{m-j})-U(t_{m-j-1})\big)}_{L^{2}} \\
  \lesssim &
\norm{\frac{u(t_{m-j})-u(t_{m-j-1})}{h \sqrt{\mathcal{A}}}}^2_{H^{1+\frac{d}{4}}}+\norm{\frac{v(t_{m-j})-v(t_{m-j-1})}{h \sqrt{\mathcal{A}}}}^2_{H^{\frac{d}{4}}} \lesssim 1.
\end{aligned}
\end{equation*}
Thus we get $$\sum_{j=1}^{m-1} \norm{E_j R_1(t_{m-j+1},h ) -E_j R_1(t_{m-j},h )}_{H^1}\lesssim  \sum_{j=1}^{m-1}  h ^3 \lesssim h^2.$$
By the same arguments, we deduce that
\begin{equation*}
\begin{aligned}
&\sum_{j=1}^{m-1} \norm{E_j R_1(t_{m-j+1},-h ) -E_j R_1(t_{m-j},-h )}_{H^1}\lesssim  h^2,\ \ \ \norm{E_{m}(R_1(t_{1},h )+R_1(t_{1},-h ))}_{H^1}\lesssim  h^2.
\end{aligned}
\end{equation*}
Therefore, \eqref{p s-1} is bounded by $\norm{\sum_{k=1}^{m}W_{m-k} \zeta_k^{u}}_{H^1}\lesssim  h^2$ and then \eqref{equ u reerrors-12} becomes
 \begin{equation*}
\begin{aligned}
&\norm{e_{m+1}^{u}}_{H^{1}}
\lesssim  (m+1)h ^3+ h^2+\sum_{k=1}^{m} (m-k+1) h ^2\norm{e_k^{u}}_{H^{1}}
\lesssim  h^2 + h   \sum_{k=1}^{m}  \norm{e_k^{u}}_{H^{1}}.
\end{aligned}
\end{equation*}
By Gronwall’s inequality,  it is immediately obtained that $\norm{e_{m+1}^{u}}_{H^{1}}\lesssim h^2,$ which proves the statement \eqref{error bound} for $e_{n}^{u}$ with $n = m + 1$.

Then for $e_{m+1}^{v}$, \eqref{equ v reerrors} leads to
 \begin{equation}\label{equ v reerrors-1}
\begin{aligned}
 \norm{e_{m+1}^{v}}_{L^{2}}\lesssim & \norm{e_{m-1}^{v}}_{L^{2}}+\norm{\zeta_m^{v}}_{L^{2}}+\norm{\eta_m^{v}}_{L^{2}}+ \norm{\sqrt{\mathcal{A}} e_1^{u}}_{L^{2}}+ \sum_{k=1}^{m-1}\norm{\sqrt{\mathcal{A}}\sin( (m-k)h \sqrt{\mathcal{A}})(\zeta_k^{u}+\eta_k^{u})}_{L^{2}} \\
\lesssim &\norm{e_{m-1}^{v}}_{L^{2}}+h ^3+\norm{  e_1^{u}}_{H^{1}}+ \sum_{k=1}^{m-1}(h ^3+h ^4) \lesssim \norm{e_{m-1}^{v}}_{L^{2}}+h ^2.\\
\end{aligned}
\end{equation}
This shows a  coarse estimate for $e_{m+1}^{v}:$ $\norm{e_{m+1}^{v}}_{L^{2}}\lesssim h.$  Under the condition that $h $ is small enough,
\eqref{boundedness  num} holds for $n = m + 1$, and so the induction for \eqref{boundedness  num} is done.
In what follows,  we give the proof of the   global error  $e_{m+1}^{v}$,  which refines the error bound to a second order accuracy in time. According to the  procedure \eqref{equ v reerrors-1}, the constraint of  the accuracy comes from
$\sum_{k=1}^{n-1} \sin( (m-k)h \sqrt{\mathcal{A}}) \zeta_k^{u}$. With the same analysis presented above,
this dominating part can be bounded by
 \begin{equation*}
\begin{aligned}
\norm{\sum_{k=1}^{m-1} \sin( (m-k)h \sqrt{\mathcal{A}}) \zeta_k^{u}}_{H^1}
\lesssim & h ^3+ h ^3  \sum_{j=1}^{m-2}  \max_{\zeta \in[-h,h ]}\norm{  \frac{ \Upsilon(t_{m-j},\zeta)- \Upsilon(t_{m-j-1},\zeta)}{h \sqrt{\mathcal{A}}}  }_{L^{2}}\lesssim h ^3.
\end{aligned}
\end{equation*}
Therefore, we have
 \begin{equation*}
\begin{aligned}
 \norm{e_{m+1}^{v}}_{L^{2}}\lesssim & \norm{e_{m-1}^{v}}_{L^{2}}+\norm{\zeta_m^{v}}_{L^{2}}+\norm{\eta_m^{v}}_{L^{2}}+ \norm{\sqrt{\mathcal{A}} e_1^{u}}_{L^{2}}+ \sum_{k=1}^{m-1}\norm{\sin( (n-k)h \sqrt{\mathcal{A}})(\zeta_k^{u}+\eta_k^{u})}_{H^{1}}
 \\
\lesssim &\norm{e_{m-1}^{v}}_{L^{2}}+h ^3+\norm{  e_1^{u}}_{H^{1}}+\sum_{k=1}^{m-1} \norm{\eta_k^{u}}_{H^{1}} \lesssim \norm{e_{m-1}^{v}}_{L^{2}}+h ^3,
\end{aligned}
\end{equation*}
which  by Gronwall’s inequality shows $\norm{e_{m+1}^{v}}_{L^{2}}\lesssim h ^2.$
This means that the statement \eqref{error bound} holds for $e_{n}^{v}$ with $n = m + 1$, and so the induction for \eqref{error bound} is finished.

The proof of the convergence is complete.
\end{proof}
 \section{Long term analysis}\label{sec:4}
In this section, in order to make the expression be
concise, we consider  one-dimensional Klein-Gordon \eqref{klein-gordon} with $d=1$ and $\mathbb{T}=[-\pi,\pi]$.
It is noted that the content of this section can be presented for two or three dimensional Klein-Gordon equation without any difficulty (only the notations and expressions will become complex and tedious).

\subsection{Main result  of this section}
The long term analysis will be given for the fully discrete scheme which considers the  pseudo-spectral semi-discretisation (\cite{Shen}) in space  $x\in \mathbb{T}$.
Choose a   positive integer $M$ and divide $[-\pi,\pi]$ into a series of equidistant small intervals with the  equidistant collocation points $x_k = k\pi
/M$ for $k \in \{-M,-M+1,\ldots,M-1\}$. Consider the
following real-valued trigonometric polynomial as an approximation
for the solution $ u(t,x)$ of  \eqref{klein-gordon}
\begin{equation}\label{trigo pol}
\begin{array}[c]{ll}
 u^{M}(t,x)=\sum\limits_{|j|\leq M}^{'}
 q_j(t)\mathrm{e}^{\mathrm{i}jx}\approx  u(t,x),
\end{array}
\end{equation}
where  $q_j$ are the discrete Fourier coefficients of $u^{M}(t,x)$ and the prime indicates that
the first and last terms in the summation are taken with the factor
$1/2$. Inserting  $u^{M}(t,x)$ into  \eqref{klein-gordon} and collecting all coefficients $q_j$ in a $(2M+1)$-dimensional coefficient vector $$\boldsymbol{q} =
(q_{M},q_{M-1},\ldots,q_{1},q_0,q_{-1},\ldots,q_{-M+1},q_{-M})^\intercal\footnote{In this section, we use the bold symbols for all the $(2M+1)$-dimensional vectors and their elements are denoted in the same way as $\boldsymbol{q}$.},$$
we get a  system of ODEs
\begin{equation}
\frac{d^2 \boldsymbol{q}}{dt^2}+\Omega^2 \boldsymbol{q}=\tilde{f}(\boldsymbol{q}),\ \ \boldsymbol{q}(0)=\boldsymbol{q}_0:=\mathcal{F}\varphi_1(x),\ \ \frac{d }{dt}\boldsymbol{q}(0)=\boldsymbol{p}_0:=\mathcal{F}\varphi_2(x),\label{prob}%
\end{equation}
where $\Omega$ is diagonal with entries $\omega_j=\frac{1}{\abs{j}}$ for $0<|j|\leq M$ and $0$ for $j=0$, and $\mathcal{F}$ denotes the discrete Fourier transform
$(\mathcal{F}w)_j=\frac{1}{2M}\sum\limits_{k=-M}^{M-1}w_k\mathrm{e}^{-\mathrm{i}jx_k}$
 for $|j|\leq M$. The nonlinearity $\tilde{f}(\boldsymbol{q})=\mathcal{F}f(\mathcal{F}^{-1}\boldsymbol{q})$  and it is composed of $\tilde{f}_j$ which has the form
$\tilde{f}_j(\boldsymbol{q})=-\frac{\partial}{\partial q_{-j}}V(\boldsymbol{q})$  with
$V(\boldsymbol{q})=\frac{1}{2M}\sum\limits_{k=-M}^{M}U((\mathcal{F}^{-1}\boldsymbol{q})_k).$
The system \eqref{prob} can be reformulated as a finite-dimensional complex
Hamiltonian system $\frac{d \boldsymbol{q}}{dt}=\nabla_{\boldsymbol{p}}H_M(\boldsymbol{q},\boldsymbol{p}),\ \frac{d \boldsymbol{p}}{dt}=-\nabla_{\boldsymbol{q}}H_M(\boldsymbol{q},\boldsymbol{p})$ with the energy $H_M(\boldsymbol{q},\boldsymbol{p})=\frac{1}{
2}\sum\limits_{|j|\leq M}^{'}\big(
|p_j|^2+\omega_j^2|q_j|^2\big)+V(\boldsymbol{q}).$ The actions (for $|j|\leq M$)
and the momentum of \eqref{prob} are respectively given by $
I_j(q,p)=\frac{\omega_j}{ 2}|q_j|^2+ \frac{1}{ 2\omega_j}|p_j|^2$ and $
K(q,p)=-\sum\limits_{|j|\leq M}''\mathrm{i}jq_{-j}p_{j},$
  where the double prime indicates that the first and
last terms in the summation are taken with the factor $1/4$.
This section is interested in real approximation \eqref{trigo pol} and thus
we have
  $q_{-j} = \bar{q}_j$  and $p_{-j} =
\bar{p}_{j}$  for $|j|\leq M$. Applying  the low-regularity (LR) integrator proposed in Definition \ref{def-se} to the system \eqref{prob} leads to the following fully discrete LR  integrator.

\begin{mydef} (\textbf{Fully discrete LR integrator})
Choose a   positive integer $M$ for the  pseudo-spectral method and  a time stepsize $0<h <1$. Denote the numerical solution as  $\boldsymbol{q}_{n}\approx \boldsymbol{q}(t_{n})$ and   $\boldsymbol{p}_{n}\approx \boldsymbol{p}(t_{n})$ at  $t_n=nh $. The scheme for solving  \eqref{prob} is defined as
 \begin{equation}\label{methodF}
\begin{aligned}
\boldsymbol{q}_{n+1}=&2\cos(h \Omega) \boldsymbol{q}_n-\boldsymbol{q}_{n-1}+h ^2\sinc(  h \Omega) \tilde{f}( \boldsymbol{q}_n),\\
\boldsymbol{p}_{n+1}=&-2h \Omega^2 \sinc( h \Omega) \boldsymbol{q}_n+\boldsymbol{p}_{n-1}+h \big(\cos(h \Omega) +\sinc(  h \Omega)\big) \tilde{f}( \boldsymbol{q}_n),
\end{aligned}
\end{equation}
where  $ n=1,2,\ldots,T/h -1$.
The starting value $\boldsymbol{q}_1,\boldsymbol{p}_1$ is given by
the fully discrete scheme of   \eqref{start value}.
\end{mydef}

In this section, for
 \begin{equation*}
\begin{aligned}
\boldsymbol{k}=&
(k_{M},k_{M-1},\ldots,k_{1},k_0,k_{-1},\ldots,k_{-M+1},k_{-M})^\intercal,\\
\boldsymbol{\omega} =&
(\omega_{M},\omega_{M-1},\ldots,\omega_{1},\omega_0,\omega_{-1},\ldots,\omega_{-M+1},\omega_{-M})^\intercal,
\end{aligned}
\end{equation*}
    we denote
\begin{equation}\label{denot}
\begin{aligned}
&|\boldsymbol{k}| = (|k_l|)_{\abs{l}\leq M},\ \
\norm{\boldsymbol{k}}=\sum\limits_{\abs{l}\leq M}|k_l|, \  \ \boldsymbol{k}\cdot
\boldsymbol{\omega} =\sum\limits_{\abs{l}\leq M}k_l\omega_l, \ \
 \boldsymbol{\omega} ^{\sigma |\boldsymbol{k}| }=\Pi_{\abs{l}\leq M} \omega_l^{\sigma |k_l|}\
\textmd{for real}\ \sigma.
\end{aligned}
\end{equation}
   The vector $(0, \ldots , 0, 1, 0, \ldots,0)^\intercal$ with
the only entry at the $|j|$-th position for $j\in \{-M,-M+1,\ldots,M\}$ is
denoted by  $\langle j\rangle$.  The weighted norm of the Sobolev space $H^{s}$ is denoted in this section that
$\norm{\boldsymbol{q}}_{H^s}=\Big(\sum\limits_{|j|\leq M}''\omega_j^{2s}
|q_j|^2\Big)^{1/2}$ with $s=0,1$.
For the long term analysis,  we need the following assumptions which  have
been considered in \cite{Cohen08-1}.
\begin{assum}\label{ass}
 $\bullet$
The initial values $\boldsymbol{q}(0)$ and $\boldsymbol{p}(0)$ of \eqref{prob} are required to
have a small upper bound $
\big(\norm{\boldsymbol{q}(0)}_{H^{1+s}}^2+\norm{\boldsymbol{p}(0)}_{H^{s}}^2\big)^{1/2}\leq \epsilon
$
with  $0<\epsilon<1$ and $s\geq 1/2$. The nonlinear  function $f(u)$ is assumed to be smooth with $f'(0)=0$.

 $\bullet$ In the proof of the main result, we need a lower bound of the following part
\begin{equation}
|\sin(\frac{h }{2}(\omega_j-\boldsymbol{k}\cdot \boldsymbol{\omega}))\cdot
\sin(\frac{h }{2}(\omega_j+\boldsymbol{k}\cdot \boldsymbol{\omega}))| \geq \epsilon^{1/2}h ^2(
 \omega_j+|\boldsymbol{k}\cdot\boldsymbol{\omega} |),
 \label{inequa}%
\end{equation}
which is called as  the non-resonance condition. If $\boldsymbol{k} \neq\pm\langle j\rangle$ but $(j,\boldsymbol{k})$ satisfies
this condition, we collect them in   a  set
\begin{equation}
\mathcal{S}_{\epsilon,h }=\{(j,\boldsymbol{k}):|j|\leq M,\ \norm{\boldsymbol{k}}\leq2N,\ \
\boldsymbol{k} \neq\pm\langle j\rangle,\ \textmd{satisfying} \
\eqref{inequa}\},
 \label{near-resonant S}%
\end{equation}
where   $N \geq 3$ refers to   the truncation number of the expansion
\eqref{MFE-trigonometric} which will be given below. Otherwise, we put them in a  set of
near-resonant indices
\begin{equation}
\mathcal{R}_{\epsilon,h }=\{(j, \boldsymbol{k}):|j|\leq M,\ \norm{\boldsymbol{k}}\leq2N,\ \
\boldsymbol{k} \neq\pm\langle j\rangle,\ \textmd{not}\ \textmd{satisfying} \
\eqref{inequa}\}.
 \label{near-resonant R}%
\end{equation}
For the elements in this set, we need to further require  that
 there exist  $\sigma
> 0$ and a constant $C_0$ such that
\begin{equation}
\sup_{(j, \boldsymbol{k})\in\mathcal{R}_{\epsilon,h }}
\frac{\omega_j^{\sigma}}{\boldsymbol{\omega} ^{\sigma
|\boldsymbol{k}| }}\epsilon^{\norm{\boldsymbol{k}}/2}\leq C_0 \epsilon^N.
 \label{non-resonance cond}%
\end{equation}
For the  rationality and feasibility of this assumption  \eqref{non-resonance cond}, we refer to Section 2.2 of \cite{Cohen08}.

 $\bullet$
We also assume that
$|\sin(h \omega_j)|\geq h  \epsilon^{1/2}$  for $|j|\leq M,$
and  the following   condition is further required for  a positive constant $c > 0$, $ (j, \boldsymbol{k})$ of the form $j=j_1+j_2$
 and $\boldsymbol{k}=\pm\langle j_1\rangle\pm\langle j_2\rangle$:
\begin{equation}
\begin{aligned}
|\sin&(\frac{h }{2}(\omega_j-\boldsymbol{k}\cdot \boldsymbol{\omega}))\cdot
\sin(\frac{h }{2}(\omega_j+\boldsymbol{k}\cdot \boldsymbol{\omega}))| \geq c h ^2 |\sinc
(h \omega_j)|.
 \end{aligned}
 \label{another-non-res cond}%
\end{equation}

\end{assum}

The main result of this section is  given in  the following theorem.
\begin{mytheo}\label{main theo1} \textbf{(Long-time near conservations)} Under the conditions  of Assumption \ref{ass} and   the requirement  $f(0)=0$,  the fully discrete LR integrator \eqref{methodF} has the following long-time numerical   conservations
\begin{equation}
\begin{aligned}
  &\frac{|H_M(\boldsymbol{q}_n,\boldsymbol{p}_n)-H_M(\boldsymbol{q}_0,\boldsymbol{p}_0)|}{\epsilon^2} \leq C (\epsilon+h ^3),\ \
  \sum\limits_{l=0}^{M}\omega_l^{2s+1}\frac{|I_l(\boldsymbol{q}_n,\boldsymbol{p}_n)I_l
  (\boldsymbol{q}_0,\boldsymbol{p}_0)|}{\epsilon^2}
\leq
C  \epsilon,\\
&\frac{|K(\boldsymbol{q}_n,\boldsymbol{p}_n)-K  (\boldsymbol{q}_0,\boldsymbol{p}_0)|}{\epsilon^2} \leq C
(\epsilon+M^{-s}+\epsilon t M^{-s+1}),
\end{aligned}
\label{near-conser res}%
\end{equation}
 where    $0\leq t=nh \leq
\epsilon^{-N+1}$ and the constant $C$ is independent of   $\epsilon, M, h , n$ but
depends on $N, C_0, c$ appeared in   Assumption \ref{ass}.
\end{mytheo}
\subsection{Proof of Theorem \ref{main theo1}}
Although the technique for the proof is based on  a   multi-frequency modulated Fourier expansion \cite{Cohen08,Cohen08-1,Cohen08-0,W23}, there are some new difficulties in the analysis. The main issue comes from that
the starting value $\boldsymbol{q}_1,\boldsymbol{p}_1$   is given by a non-symmetric method
and this method \eqref{start value}  is not the one-step form of our proposed scheme  \eqref{methodF}.
Therefore, the technique of modulated Fourier expansion cannot be used on the whole interval $0\leq t=nh \leq
\epsilon^{-N+1}$. To solve this problem, we divide the analysis into two parts. The first one is on $[0,h]$ and the error of energy conservation is immediately obtained based on the accuracy of  \eqref{start value}, i.e., $\mathcal{O}(h ^3)$ in \eqref{near-conser res}.
 The second concern is on the long time interval $[h,\epsilon^{-N+1}]$ with the initial value  $
\big(\norm{\boldsymbol{q}_1}_{H^{1+s}}^2+\norm{\boldsymbol{p}_1}_{H^{s}}^2\big)^{1/2}\leq \epsilon
$ and the modulated Fourier expansion is considered for our proposed two-step scheme  \eqref{methodF} to derive the long time conservations. It is noted that the symmetry of the proposed scheme plays   a crucial role in the analysis and it ensures the long time near conservations

\subsubsection{The result of modulated Fourier expansion} \label{subsec:MEF}
\begin{mylemma}\label{MFE lem}
Under the conditions of Theorem \ref{main theo1}, the numerical
solution $(\boldsymbol{q}_n, \boldsymbol{p}_n)$ with $n\geq1$  produced  by
\eqref{methodF} can be expressed by a kind of multi-frequency expansion (called as modulated
Fourier expansion) at $t=nh \geq h $:
\begin{equation}
\begin{aligned} & \boldsymbol{\tilde{q}}(t)= \sum\limits_{\norm{\boldsymbol{k}}\leq 2N} \mathrm{e}^{\mathrm{i}(\boldsymbol{k}\cdot\boldsymbol{\omega}) t}\boldsymbol{\zeta}^{\boldsymbol{k}}(\epsilon t),\
\ \boldsymbol{\tilde{p}}(t)=  \sum\limits_{\norm{\boldsymbol{k}}\leq2N} \mathrm{e}^{\mathrm{i}(\boldsymbol{k}\cdot\boldsymbol{\omega}) t}\boldsymbol{\eta}^{\boldsymbol{k}}(\epsilon t)\\
\end{aligned}
\label{MFE-trigonometric}%
\end{equation}
such that
\begin{equation}
\norm{\boldsymbol{q}_n-\boldsymbol{\tilde{q}}(t)}_{H^{1+s}}\leq
C\epsilon^N,\ \ \ \norm{\boldsymbol{p}_n-\boldsymbol{\tilde{p}}(t)}_{H^{s}}\leq
C\epsilon^{N-1}\quad for \ \ h \leq  t=nh \leq \epsilon^{-1},
\label{error MFE}%
\end{equation}
where we use the notations introduced in \eqref{denot},  $N>0$ refers to   the truncation number of the expansion, and $\boldsymbol{\zeta}^{\boldsymbol{k}}(\tilde{t}), \boldsymbol{\eta}^{\boldsymbol{k}}(\tilde{t})$ are modulation functions
that  vary in a slow time $\tilde{t}:= \epsilon
t$. The expansion \eqref{MFE-trigonometric} is     bounded in the energy space   for  $h \leq  t\leq \epsilon^{-1}$:
\begin{equation}
\norm{\boldsymbol{\tilde{q}}(t)}_{H^{1+s}}+\norm{\boldsymbol{\tilde{p}}(t)}_{H^{s}}\leq C\epsilon.
\label{bound TMFE}%
\end{equation}
Moreover, denoting $r_j:=\tilde{q}_j(t)-\zeta_j^{\langle j\rangle}(\epsilon t)
\mathrm{e}^{\mathrm{i} \omega_j t}-\zeta_j^{-\langle
j\rangle}(\epsilon t) \mathrm{e}^{-\mathrm{i} \omega_j t}$ for $ |j|\leq M,$
the following bound can be derived
\begin{equation}\begin{aligned}
 \norm{\boldsymbol{r}}_{H^{1+s}}\leq C\epsilon^2.
\end{aligned}
\label{jth TMFE}
\end{equation}
The modulation functions $\boldsymbol{\zeta}^{\boldsymbol{k}}$ (as well as their any fixed number of derivatives w.r.t the slow time  $\tilde{t}$)  are bounded by
\begin{equation}\begin{aligned}
&\sum\limits_{\norm{\boldsymbol{k}}\leq2N}\Big(\frac{\boldsymbol{\omega}^{|\boldsymbol{k}| }}{\epsilon^{[[\boldsymbol{k}]]}}\norm{\boldsymbol{\zeta}^{\boldsymbol{k}}(\epsilon
t)}_{H^{s}}\Big)^2\leq C \ \ \ \textmd{with}\ \ \ [[\boldsymbol{k}]]=\left\{\begin{aligned} & (\norm{\boldsymbol{k}}+1)/2,\quad
\boldsymbol{k} \neq0,\\
&3/2,\qquad\quad\quad  \ \   \boldsymbol{k}=0.
\end{aligned}\right.
\end{aligned}
\label{bound modu func}%
\end{equation}
For the functions $\boldsymbol{\eta}^{\boldsymbol{k}}$, they have the connection \eqref{MFE-zetaeta} with   $\boldsymbol{\zeta}^{\boldsymbol{k}}$. All the modulation functions have the properties
$\zeta_{-j}^{-\boldsymbol{k}}=\bar{\zeta_{j}^{\boldsymbol{k}}}$ and $\eta_{-j}^{-\boldsymbol{k}}=\bar{\eta_{j}^{\boldsymbol{k}}}$. The constants $C$ in this lemma are
independent of $\epsilon, M, h $ and   $t \leq \epsilon^{-1}.$
\end{mylemma}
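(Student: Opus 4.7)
The plan is to construct the modulation functions $\boldsymbol{\zeta}^{\boldsymbol{k}}$ and $\boldsymbol{\eta}^{\boldsymbol{k}}$ by formally substituting the ansatz \eqref{MFE-trigonometric} into the two-step scheme \eqref{methodF}, matching coefficients of the exponentials $\mathrm{e}^{\mathrm{i}(\boldsymbol{k}\cdot\boldsymbol{\omega}) t}$, and then bounding the defect of this ansatz along the recursion. First I would expand the shifts $\boldsymbol{\tilde{q}}(t\pm h)$ in Taylor series with respect to the slow variable $\tilde{t}=\epsilon t$, noting that each differentiation of $\boldsymbol{\zeta}^{\boldsymbol{k}}$ produces a factor of $\epsilon$. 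Plugged into $\boldsymbol{q}_{n+1}-2\cos(h\Omega)\boldsymbol{q}_n+\boldsymbol{q}_{n-1}$ and separated by frequency, the leading linear operator acting on $\zeta_j^{\boldsymbol{k}}$ is
\begin{equation*}
2\bigl(\cos(h\boldsymbol{k}\cdot\boldsymbol{\omega})-\cos(h\omega_j)\bigr)=-4\sin\bigl(\tfrac{h}{2}(\omega_j-\boldsymbol{k}\cdot\boldsymbol{\omega})\bigr)\sin\bigl(\tfrac{h}{2}(\omega_j+\boldsymbol{k}\cdot\boldsymbol{\omega})\bigr),
\end{equation*}
which is exactly the quantity controlled by the non-resonance hypothesis \eqref{inequa}. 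The nonlinearity $\tilde{f}(\boldsymbol{\tilde q})$ would be expanded around the dominant part $\sum_j \zeta_j^{\pm\langle j\rangle}\mathrm{e}^{\pm\mathrm{i}\omega_j t}$, producing convolution-type source terms indexed by $\boldsymbol{k}$.

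Next I would derive the modulation system. For indices with $\boldsymbol{k}=\pm\langle j\rangle$ the linear operator above vanishes, so the next order in the Taylor expansion is kept and yields a first-order ODE in $\tilde{t}$ for the dominant modulation function $\zeta_j^{\pm\langle j\rangle}$, driven by the resonant part of $\tilde{f}$. For indices $(j,\boldsymbol{k})\in\mathcal{S}_{\epsilon,h}$ the linear operator is invertible with lower bound $\epsilon^{1/2}h^2(\omega_j+|\boldsymbol{k}\cdot\boldsymbol{\omega}|)$, so $\zeta_j^{\boldsymbol{k}}$ is obtained algebraically from lower-order modulation functions. For the near-resonant set $\mathcal{R}_{\epsilon,h}$, the small denominator is absorbed via \eqref{non-resonance cond}, which converts powers of $\boldsymbol{\omega}^{|\boldsymbol{k}|}$ into powers of $\omega_j^\sigma$ at the cost of a factor $\epsilon^N$. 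The analogous procedure applied to the second line of \eqref{methodF} yields the link
\begin{equation*}
\boldsymbol{\eta}^{\boldsymbol{k}}=\mathrm{i}(\boldsymbol{k}\cdot\boldsymbol{\omega})\,\boldsymbol{\zeta}^{\boldsymbol{k}}+\epsilon\,\partial_{\tilde t}\boldsymbol{\zeta}^{\boldsymbol{k}}+\text{(lower order)}
\end{equation*}
(the precise relation is the one referred to as \eqref{MFE-zetaeta} in the paper).

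The bound \eqref{bound modu func} would then be established by induction on $\|\boldsymbol{k}\|$: the base case $\boldsymbol{k}=\pm\langle j\rangle$ uses the smallness of the initial data \eqref{initi cond} together with the algebra property of the weighted $H^1$-type norm, and the inductive step uses the explicit algebraic representation of $\zeta_j^{\boldsymbol{k}}$ combined with the non-resonance lower bound. The initial values $\boldsymbol{\zeta}^{\boldsymbol{k}}(0),\boldsymbol{\eta}^{\boldsymbol{k}}(0)$ are fixed so that $\boldsymbol{\tilde q}(h)=\boldsymbol{q}_1$ and $\boldsymbol{\tilde p}(h)=\boldsymbol{p}_1$ via the starter \eqref{start valueF}; the third-order local accuracy of the starter keeps the $\epsilon$-size of the principal modulation functions and makes the auxiliary ones small. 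The remainder estimate \eqref{jth TMFE} then follows by summing \eqref{bound modu func} over $\boldsymbol{k}\neq\pm\langle j\rangle$, since every such term carries at least an extra factor $\epsilon$.

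Finally, I would prove the error bound \eqref{error MFE}. Substituting the truncated expansion back into \eqref{methodF} produces a defect whose size comes from the truncation $\|\boldsymbol{k}\|>2N$ and from the Taylor remainders of the slow expansion; both are $\mathcal{O}(\epsilon^N)$. The discrete Duhamel recursion developed in Section \ref{sec:32} (applied now to the difference $\boldsymbol{q}_n-\boldsymbol{\tilde q}(t_n)$) together with \eqref{further-non-res cond} to control the factor $1/\sin(h\omega_j)$ gives the claimed $\mathcal{O}(\epsilon^N/h)$ bound on $[h,\epsilon^{-1}]$. The hardest step is the inductive bookkeeping of \eqref{bound modu func}: one must track how convolution against the nonlinearity redistributes the weights $\boldsymbol{\omega}^{|\boldsymbol{k}|}/\epsilon^{[[\boldsymbol{k}]]}$ while avoiding loss of $\epsilon$-powers at near-resonances, and this is precisely where the full strength of Assumption \ref{ass} (both \eqref{non-resonance cond} and \eqref{another-non-res cond}) is needed to close the induction.
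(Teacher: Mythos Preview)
Your overall architecture matches the paper: insert the ansatz into \eqref{methodF}, compare coefficients of $\mathrm{e}^{\mathrm{i}(\boldsymbol{k}\cdot\boldsymbol{\omega})t}$, identify the operator $-4\sin(\tfrac h2(\omega_j-\boldsymbol{k}\cdot\boldsymbol{\omega}))\sin(\tfrac h2(\omega_j+\boldsymbol{k}\cdot\boldsymbol{\omega}))$, split into diagonal/off-diagonal/near-resonant cases, fix initial data at $t=h$ via $(\boldsymbol{q}_1,\boldsymbol{p}_1)$, and close with the discrete Duhamel estimate for the remainder. The $\boldsymbol{\eta}^{\boldsymbol{k}}$--$\boldsymbol{\zeta}^{\boldsymbol{k}}$ link and the role of \eqref{further-non-res cond} in the remainder step are also as in the paper.

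There is, however, a genuine gap in the construction step. You propose to obtain \eqref{bound modu func} by \emph{induction on $\|\boldsymbol{k}\|$}, but the modulation system does not decouple that way: the diagonal ODE for $\dot{\zeta}_j^{\pm\langle j\rangle}$ is driven by the full nonlinearity $\sum_{\boldsymbol{k}^1+\cdots+\boldsymbol{k}^m=\pm\langle j\rangle}\zeta_{j_1}^{\boldsymbol{k}^1}\cdots\zeta_{j_m}^{\boldsymbol{k}^m}$, which already involves off-diagonal coefficients of arbitrary $\|\boldsymbol{k}^i\|$, and conversely the algebraic equations for off-diagonal $\zeta_j^{\boldsymbol{k}}$ with $\|\boldsymbol{k}\|=1$ (i.e.\ $\boldsymbol{k}=\pm\langle l\rangle$, $l\neq j$) feed back on products containing factors of norm $\geq\|\boldsymbol{k}\|$. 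The paper (following Cohen--Hairer--Lubich) resolves this coupling by a \emph{reverse Picard iteration}: keep the dominant term of \eqref{new L2r} on the left, put everything else on the right evaluated at the previous iterate, and stop after $4N$ steps. The bounds \eqref{Bounds func zeta}--\eqref{Bound an} are proved for each iterate and then transferred to the final one; this is where the rescaled variables $\tilde{\zeta}_j^{\boldsymbol{k}}=\boldsymbol{\omega}^{|\boldsymbol{k}|}\epsilon^{-[[\boldsymbol{k}]]}\zeta_j^{\boldsymbol{k}}$ and the operators $A,B,\Lambda,\Upsilon$ enter. Your inductive scheme would not terminate without this iterative mechanism.

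Two smaller corrections. First, for $(j,\boldsymbol{k})\in\mathcal{R}_{\epsilon,h}$ the paper does \emph{not} invert the small denominator: it simply sets $\zeta_j^{\boldsymbol{k}}\equiv 0$, and condition \eqref{non-resonance cond} is used only afterwards to show that the resulting defect is $\mathcal{O}(\epsilon^{N+1})$. Second, the nonlinearity is Taylor-expanded around $0$ (using $f(0)=0$, and after moving $\tilde f'(0)\boldsymbol{q}$ into the linear part), not around the dominant diagonal part. Finally, condition \eqref{another-non-res cond} is used specifically to bound the $\|\boldsymbol{k}\|=2$ contribution to \eqref{jth TMFE}, not in the general induction for \eqref{bound modu func}.
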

\begin{proof}
The proof of this lemma   will
be composed of three parts.
%
%

 \textbf{Step 1. Modulation equations of the  modulation functions.}

According to the  formulae of  \eqref{methodF}, it is
required that
 \begin{equation}\label{methodpq rea}
\begin{aligned}
&\boldsymbol{\tilde{p}}(t+h )-\boldsymbol{\tilde{p}}(t-h )
=(I+h \Omega\cot(h \Omega))\frac{\boldsymbol{\tilde{q}}(t+h )+\boldsymbol{\tilde{q}}(t-h )}{h }
-2(\cos(h \Omega)+h \Omega\csc(h \Omega))\frac{ \boldsymbol{\tilde{q}}(t)}{h }.
\end{aligned}
\end{equation}
Inserting \eqref{MFE-trigonometric} into this result and  comparing the
coefficients of $\mathrm{e}^{\mathrm{i}(\boldsymbol{k}\cdot\boldsymbol{\omega}) t}$ implies
\begin{equation}\label{MFE-zetaeta}%
\begin{aligned}&\boldsymbol{\eta}^{\boldsymbol{k}}(\epsilon t)=\widehat{L}^{\boldsymbol{k}}(\epsilon
h  D) \boldsymbol{\zeta}^{\boldsymbol{k}}(\epsilon t),
\end{aligned} %
\end{equation}
where  $\widehat{L}^{\boldsymbol{k}}(\epsilon
h  D)$ is defined by \begin{equation}\label{new L1}
 \begin{array}[c]{ll}
\widehat{L}^{\boldsymbol{k}}(\epsilon
h  D):=& \Big((I+h \Omega\cot(h \Omega)) \big(\mathrm{e}^{\mathrm{i}(\boldsymbol{k} \cdot\boldsymbol{\omega})
h }\mathrm{e}^{\epsilon
h  D}+\mathrm{e}^{-\mathrm{i}(\boldsymbol{k} \cdot\boldsymbol{\omega})
h }\mathrm{e}^{-\epsilon
h  D}\big)\\&-2(\cos(h \Omega)+h \Omega\csc(h \Omega))\Big) \big( h \big(\mathrm{e}^{\mathrm{i}(\boldsymbol{k} \cdot\boldsymbol{\omega})
h }\mathrm{e}^{\epsilon
h  D}-\mathrm{e}^{-\mathrm{i}(\boldsymbol{k} \cdot\boldsymbol{\omega})
h }\mathrm{e}^{-\epsilon
h  D}\big)\big)^{-1}\end{array}\end{equation}
with the
differential operator  $D$ (see \cite{hairer2006}). Therefore, we only need to show the existence and uniqueness of $\boldsymbol{\zeta}^{\boldsymbol{k}}$ and then $\boldsymbol{\eta}^{\boldsymbol{k}}$ are obtained by \eqref{MFE-zetaeta}.

According to the first   term  of     \eqref{methodF}, we consider
 \begin{equation}\label{methods}
\begin{aligned}
\boldsymbol{\tilde{q}}(t+h )-2\cos(h \Omega)\boldsymbol{\tilde{q}}(t)+\boldsymbol{\tilde{q}}(t-h )=h ^2\sinc(  h \Omega) \tilde{f}\big( \boldsymbol{\tilde{q}}(t)\big) \quad \textmd{up to a small defect}.
\end{aligned}
\end{equation}
In the analysis, we  assume that
$f'(0)=0$ since the linearization of $f(\boldsymbol{\tilde{q}})$ can be moved to the linear part. This procedure gives a new nonlinearity $g(\boldsymbol{\tilde{q}})=f(\boldsymbol{\tilde{q}})-f'(0)\boldsymbol{\tilde{q}}$ and it satisfies $g'(0)=0$.
 Inserting \eqref{MFE-trigonometric}  into   \eqref{methods}, expanding the right-hand side into a Taylor
series around zero and comparing the coefficients of
$\mathrm{e}^{\mathrm{i}(\boldsymbol{k}\cdot\boldsymbol{\omega}) t}$ leads to
\begin{equation}\label{ljkqp}
\begin{aligned}
L^{\boldsymbol{k}}(\epsilon
h  D) \zeta_j^{\boldsymbol{k}}(\epsilon t)=&h ^2\sinc(  h \Omega)\sum\limits_{m\geq
2}\frac{f^{(m)}(0)}{m!}
\sum\limits_{\boldsymbol{k}^1+\cdots+\boldsymbol{k}^m=\boldsymbol{k}}\sum\limits_{j_1+\cdots+j_m\equiv j
\textmd{mod}2M}'  \big(\zeta_{j_1}^{\boldsymbol{k}^1}\cdot\ldots\cdot
\zeta_{j_m}^{\boldsymbol{k}^m}\big) (\epsilon t),\\
\end{aligned}
\end{equation}
where    the right-hand side is obtained as in \cite{Cohen08-0} and $L^{\boldsymbol{k}}(\epsilon
h  D)$ is defined by \begin{equation}\label{new L2}
 \begin{aligned}
L_j^{\boldsymbol{k}}(\epsilon
h  D) \zeta_j^{\boldsymbol{k}}(\epsilon t):=&  \mathrm{e}^{\mathrm{i}(\boldsymbol{k} \cdot\boldsymbol{\omega})
h }\mathrm{e}^{\epsilon
h  D}\zeta_j^{\boldsymbol{k}}(\epsilon t)-2\cos(h \omega_j)\zeta_j^{\boldsymbol{k}}(\epsilon t)+\mathrm{e}^{-\mathrm{i}(\boldsymbol{k} \cdot\boldsymbol{\omega})
h }\mathrm{e}^{-\epsilon
h  D}\zeta_j^{\boldsymbol{k}}(\epsilon t)\\
=&  4   s_{\langle j\rangle+\boldsymbol{k}}s_{\langle
j\rangle-\boldsymbol{k}}\zeta_j^{\boldsymbol{k}}(\epsilon t)+2\textmd{i} h  \epsilon
  s_{2k} \dot{\zeta}_j^{\boldsymbol{k}}(\epsilon t)+  h ^2 \epsilon^2
  c_{2k} \ddot{\zeta}_j^{\boldsymbol{k}}(\epsilon t)+\cdots\end{aligned}\end{equation}
  with $s_{\boldsymbol{k}} :=\sin(
\frac{h }{2}(\boldsymbol{k}\cdot\boldsymbol{\omega}))$ and $ c_{\boldsymbol{k}} :=\cos(
\frac{h }{2}(\boldsymbol{k}\cdot\boldsymbol{\omega}))$.
It is noted  that  here and after all the encountered derivatives of  $\boldsymbol{\zeta}^{\boldsymbol{k}}$ are w.r.t the slow time  $\tilde{t}$ and  $\tilde{f}^{(m)}(\boldsymbol{q})$ indicates the $m$-th derivative  of $\tilde{f} (\boldsymbol{q})$ w.r.t. $\boldsymbol{q}$.
For different $ \boldsymbol{k}$, it can be checked that
\begin{equation}\label{new L2r}
L_j^{\boldsymbol{k}}(\epsilon
h  D) \zeta_j^{\boldsymbol{k}}(\epsilon t)=\left\{\begin{aligned} & \pm2 \textmd{i} \epsilon  h  \sin(h   \omega_j)
\dot{\zeta}_j^{\pm \langle j\rangle}+  h ^2 \epsilon^2
\cos(h   \omega_j) \ddot{\zeta}_j^{\pm \langle j\rangle}(\epsilon t)+\cdots,\ \boldsymbol{k}=\pm \langle j\rangle;\\
&4   s_{\langle j\rangle+\boldsymbol{k}}s_{\langle
j\rangle-\boldsymbol{k}}\zeta_j^{\boldsymbol{k}}(\epsilon t)+2\textmd{i} h  \epsilon
  s_{2\boldsymbol{k}} \dot{\zeta}_j^{\boldsymbol{k}}(\epsilon t)+\cdots,     \quad  (j,\boldsymbol{k})\in \mathcal{S}_{\epsilon,h };
  \\
&0,  \qquad \qquad  \qquad    \qquad \qquad  \qquad    \qquad \qquad  \qquad    \ \ (j,\boldsymbol{k})\in \mathcal{R}_{\epsilon,h }.
\end{aligned}\right.
\end{equation}
It is noted that if $ \boldsymbol{k}\in \mathcal{R}_{\epsilon,h }$,
the non-resonance condition  \eqref{non-resonance cond}   ensures
that the defect in simply setting $ \zeta_j^{\boldsymbol{k}}\equiv 0$ is of size
$\mathcal{O}(\epsilon^{N+1})$ in an appropriate Sobolev-type norm.  Therefore, we set $ \zeta_j^{\boldsymbol{k}}\equiv 0$  for this case and so that $L_j^{\boldsymbol{k}}( \epsilon
h  D) \zeta_j^{\boldsymbol{k}}(\epsilon t)=0$.
Based on  \eqref{new L2r} and \eqref{ljkqp},  we get the formal modulation equations
  for the modulated functions  $ \zeta_j^{\boldsymbol{k}}$. We should point out that
  the dominating
terms in the formal modulation equations are  \begin{equation}\label{domi term}\pm2 \textmd{i} \epsilon  h  \sin(h   \omega_j)
\dot{\zeta}_j^{\pm \langle j\rangle} \ \ (\boldsymbol{k}=\pm \langle j\rangle)\ \  \textmd{and}\ \  4   s_{\langle j\rangle+\boldsymbol{k}}s_{\langle
j\rangle-\boldsymbol{k}}\zeta_j^{\boldsymbol{k}}(\epsilon t)\ \   ( (j,\boldsymbol{k})\in \mathcal{S}_{\epsilon,h }).\end{equation}

  Since $
\dot{\zeta}_j^{\pm \langle j\rangle}$ appears in the dominating
term for $\boldsymbol{k}=\pm \langle j\rangle$, we need to derive the initial values
 for $\zeta_j^{\pm \langle j\rangle}(h )$ which are deduced by requiring $\boldsymbol{\tilde{q}} (h ) =
\boldsymbol{q}_1$ and $\boldsymbol{\tilde{p}} (h ) = \boldsymbol{p}_1$. From the first requirement, it follows that
\begin{equation}\label{initial pl}
\begin{aligned}&\zeta_j^{\langle
j\rangle}(h )+\zeta_j^{-\langle j\rangle}(h )=\mathrm{e}^{-\mathrm{i}\omega_j
h }(\boldsymbol{q}_1)_j-\mathrm{e}^{-\mathrm{i}\omega_j
h }\sum\limits_{\boldsymbol{k} \neq
\pm  \langle j\rangle} \mathrm{e}^{\mathrm{i}(\boldsymbol{k} \cdot\boldsymbol{\omega})
h }\zeta_j^{\boldsymbol{k}}(h ).
\end{aligned}
\end{equation}
Moreover, according to the formula \eqref{MFE-zetaeta} and by using the Taylor series of $\widehat{L}^{\pm\langle j\rangle}$ \eqref{new L1}, we have
 \begin{equation}\label{impor-zetaeta}%
\begin{aligned}&\eta_j^{\pm\langle j\rangle}=\widehat{L}_j^{\pm\langle j\rangle}(\epsilon
h  D) \zeta_j^{\pm\langle j\rangle}=\pm\textmd{i} \omega_j\zeta_j^{\pm\langle
j\rangle}+  \epsilon \dot{\zeta}_j^{\pm\langle
j\rangle}\pm\cdots.
\end{aligned} %
\end{equation}
Then the requirement $\boldsymbol{\tilde{p}} (h ) = \boldsymbol{p}_1$ yields
$$ \begin{aligned}\eta_j^{\langle
j\rangle}(h )+\eta_j^{-\langle j\rangle}(h )=&\mathrm{e}^{-\mathrm{i}\omega_j
h }(\boldsymbol{p}_1)_j-\mathrm{e}^{-\mathrm{i}\omega_j
h }\sum\limits_{\boldsymbol{k} \neq
\pm  \langle j\rangle}\mathrm{e}^{\mathrm{i}(\boldsymbol{k} \cdot\boldsymbol{\omega})
h } \eta_j^{\boldsymbol{k}}(h )\\
= &\textmd{i} \omega_j(\zeta_j^{\langle
j\rangle}(h )-\zeta_j^{-\langle
j\rangle}(h ))+2  \epsilon
 \dot{\zeta}_j^{\pm\langle
j\rangle}(h )+\cdots.\end{aligned} $$
Thus we find
\begin{equation}\label{initial mi}
\begin{aligned}&\textmd{i} \omega_j(\zeta_j^{\langle
j\rangle}(h )-\zeta_j^{-\langle
j\rangle}(h ))= \mathrm{e}^{-\mathrm{i}\omega_j
h }(\boldsymbol{p}_1)_j-\mathrm{e}^{-\mathrm{i}\omega_j
h }\sum\limits_{\boldsymbol{k} \neq
\pm  \langle j\rangle} \mathrm{e}^{\mathrm{i}(\boldsymbol{k} \cdot\boldsymbol{\omega})
h }\eta_j^{\boldsymbol{k}}(h ) - 2  \epsilon
 \dot{\zeta}_j^{\pm\langle
j\rangle}(h )+\cdots.\end{aligned}
\end{equation}
This formula and  \eqref{initial pl}  determine the
initial values $\zeta_j^{\pm \langle j\rangle}(h )$.

 \textbf{Step 2. Reverse Picard iteration and bounds of the coefficient functions.}

For the specific construction of the coefficient
functions $\boldsymbol{\zeta}^{\boldsymbol{k}}$, we use  an iterative procedure and show that
  after $4N$ iteration steps, the
defects in \eqref{ljkqp}, \eqref{initial pl} and \eqref{initial mi}
are of magnitude $\mathcal{O}(\epsilon^{N+1})$ in the $H^{s}$ norm. The main idea in
the iteration  is to keep the dominant term \eqref{domi term}  on the
left-hand side and we call the procedure as reverse Picard iteration (\cite{Cohen08,Cohen08-1}).

$\bullet$ \textbf{Rescaling and new notations.} Noticing that the bound \eqref{bound modu func} is given for
 $\frac{\boldsymbol{\omega}^{|\boldsymbol{k}| }}{\epsilon^{[[\boldsymbol{k}]]}} \boldsymbol{\zeta}^{\boldsymbol{k}}(\epsilon
t) $,
we consider it as a
rescaling of $\boldsymbol{\zeta}^{\boldsymbol{k}}$:
$
 \tilde{\zeta}_{j}^{\boldsymbol{k}}:=\frac{\boldsymbol{\omega}^{|\boldsymbol{k}| }}{\epsilon^{[[\boldsymbol{k}]]}}\zeta_j^{\boldsymbol{k}},\
\tilde{\boldsymbol{\zeta}}^{\boldsymbol{k}}=\big(\tilde{\zeta}_{j}^{\boldsymbol{k}}\big)_{|j|\leq
M}=\frac{\boldsymbol{\omega}^{|\boldsymbol{k}| }}{\epsilon^{[[\boldsymbol{k}]]}}\boldsymbol{\zeta}^{\boldsymbol{k}}$
 in the space $H^{s} = (H^{s})^{\mathcal{K}} = \{\tilde{\boldsymbol{\zeta}}=
(\tilde{\boldsymbol{\zeta}}^{\boldsymbol{k}})_{\boldsymbol{k}\in \mathcal{K}}: \tilde{\boldsymbol{\zeta}}^{\boldsymbol{k}}\in H^{s}\}$ as well as its norm $|||\tilde{\boldsymbol{\zeta}}|||_{H^{s}}^2=\sum\limits_{\boldsymbol{k}\in
\mathcal{K}} \norm{\boldsymbol{\tilde{\zeta}}^{\boldsymbol{k}}}_{H^{s}}^2$, where $\mathcal{K}:=\{\boldsymbol{k}=(k_l)_{\abs{l}\leq M}\ \textmd{with integers} \
k_l:\ \norm{\boldsymbol{k}}\leq K\}$ with $K = 2N.$
In order to express the non-linearity of \eqref{ljkqp} in these
rescaled variables, we define a rescaling nonlinear function by
$\boldsymbol{g}^{\boldsymbol{k}}=(g_j^{\boldsymbol{k}})$ with
\begin{equation}\label{gfin r}
\begin{aligned}g_j^{\boldsymbol{k}}\big(\tilde{\boldsymbol{\zeta}}(\tilde{t})\big) :=&\frac{\boldsymbol{\omega}^{|\boldsymbol{k}| }}{\epsilon^{[[\boldsymbol{k}]]}}\sum\limits_{m= 2}^N\frac{f^{(m)}(0)}{m!}
\sum\limits_{\boldsymbol{k}^1+\cdots+\boldsymbol{k}^m=\boldsymbol{k}}\frac{\epsilon^{[[\boldsymbol{k}^1]]+\cdots+[[\boldsymbol{k}^m]]}}{\boldsymbol{\omega}^{|\boldsymbol{k}^1|+\cdots+|\boldsymbol{k}^m|}}
\sum\limits_{j_1+\cdots+j_m\equiv j \textmd{mod} 2M}'
\big(\tilde{\zeta}_{j_1}^{\boldsymbol{k}^1}\cdot\ldots\cdot
\tilde{\zeta}_{j_m}^{\boldsymbol{k}^m}\big)(\tilde{t}).
\end{aligned}
\end{equation}


It is remarked that all the $\boldsymbol{k}\in \mathcal{K}$ can be divided into three kinds: (I) $\boldsymbol{k}=\pm \langle
j\rangle$; (II)
$\boldsymbol{k} \neq\pm\langle j\rangle $ but \eqref{inequa} holding ($(j,\boldsymbol{k})\in \mathcal{S}_{\epsilon,h }$); (III) $\boldsymbol{k} \neq\pm\langle j\rangle $ but \eqref{inequa} not  holding  ($(j,\boldsymbol{k})\in \mathcal{R}_{\epsilon,h }$). Since for the case (III), it is required that $\tilde{\zeta}_j^{\boldsymbol{k}}=0$,   we split $\tilde{\boldsymbol{\zeta}}^{\boldsymbol{k}}$ into two parts: $ \tilde{\boldsymbol{\zeta}}^{\boldsymbol{k}}=\boldsymbol{a}^{\boldsymbol{k}} + \boldsymbol{b}^{\boldsymbol{k}}$, where
$a_j^{\boldsymbol{k}}=\tilde{\zeta}_j^{\boldsymbol{k}}$ if $\boldsymbol{k}=\pm \langle
j\rangle$,  and $b_j^{\boldsymbol{k}}=\tilde{\zeta}_j^{\boldsymbol{k}}$ if $(j,\boldsymbol{k})\in \mathcal{S}_{\epsilon,h }$.
It is remarked that
$|||\tilde{\boldsymbol{\zeta}}^{\boldsymbol{k}}|||_{H^{s}}^2=|||\boldsymbol{a}^{\boldsymbol{k}}|||_{H^{s}}^2+|||\boldsymbol{b}^{\boldsymbol{k}}|||_{H^{s}}^2$.
 To make the presentation be compact and concise, we introduce two differential operators $A, B$ to  express \eqref{new L2r} in the new rescaling for the cases (I) and (II) respectively:
\begin{equation}\label{AB}
\begin{aligned}& (A\boldsymbol{a})_j^{\pm \langle j\rangle}:=\frac{1}{\pm2 \textmd{i} \epsilon  h  \sin(h   \omega_j)}\big(h ^2 \epsilon^2
\cos(h   \omega_j)
 \ddot{a}_j^{\pm  \langle
j\rangle}+\cdots\big),  \\
&(B\boldsymbol{b})_j^{\boldsymbol{k}}:=\frac{1}{4   s_{\langle j\rangle+\boldsymbol{k}}s_{\langle
j\rangle-\boldsymbol{k}}}\big( 2\textmd{i} h  \epsilon
  s_{2\boldsymbol{k}}  \dot{b}_j^{\boldsymbol{k}} +h ^2 \epsilon^2
  c_{2\boldsymbol{k}} \ddot{b}_j^{\boldsymbol{k}}+\cdots\big)
\ \  \textmd{for} \  (j,\boldsymbol{k})\in \mathcal{S}_{\epsilon,h },
\end{aligned}
\end{equation}
where  the dots in the above two formulae represent a truncation after the $\epsilon^N$ terms.
Combining the non-linearity in \eqref{ljkqp} with the    function  $\boldsymbol{g}$ \eqref{gfin r}, we define two new functions
$\textbf{F}$ and $\textbf{G}$
\begin{equation}\label{FG}
\begin{aligned} &F_j^{\pm \langle
j\rangle}(\boldsymbol{a},\boldsymbol{b}):=\frac{1}{\pm \textmd{i}\epsilon}g_j^{\pm
\langle j\rangle}\big(\tilde{\boldsymbol{\zeta}}\big),\
  G_j^{\boldsymbol{k}}(\boldsymbol{a},\boldsymbol{b}):=-\frac{h ^2(\omega_j+|\boldsymbol{k}\cdot \boldsymbol{\omega}|)}{4s_{\langle j\rangle+\boldsymbol{k}}s_{\langle
j\rangle-\boldsymbol{k}}}g_j^{\boldsymbol{k}}\big(\tilde{\boldsymbol{\zeta}}\big)\   \textmd{for} \  (j,\boldsymbol{k})\in \mathcal{S}_{\epsilon,h }.
\end{aligned}
\end{equation}
Furthermore, let $(\Lambda \zeta)_j^{\boldsymbol{k}}:=(\omega_j+|\boldsymbol{k}\cdot \boldsymbol{\omega}|) \zeta_j^{\boldsymbol{k}} $ and $   (\Upsilon \zeta)_j^{\boldsymbol{k}}:=\sinc (h \omega_j) \zeta_j^{\boldsymbol{k}}.$

$\bullet$ \textbf{Reverse Picard iteration.} With the above  notations, we are in a position to present the  reverse Picard iterations in the new rescaling  by keeping the dominant term  on the
left-hand side. The iterations can be formulated in a very compact form
\begin{equation}\label{Abstruct Pic ite}
\begin{aligned} & \dot{\boldsymbol{a}}^{(n+1)}(\tilde{t})=\Lambda^{-1}\mathbf{F}(\boldsymbol{a}^{(n)}(\tilde{t}),\boldsymbol{b}^{(n)}(\tilde{t}))
-A\boldsymbol{a}^{(n)}(\tilde{t}), \ \
 \boldsymbol{b}^{(n+1)}(\tilde{t})=\Lambda^{-1}\Upsilon \mathbf{G}(\boldsymbol{a}^{(n)}(\tilde{t}),\boldsymbol{b}^{(n)}(\tilde{t}))-B\boldsymbol{b}^{(n)}(\tilde{t}),
\end{aligned}
\end{equation}
where  we denote the $n$th iteration  by  $[\cdot]^{(n)}$. It can be seen that
an initial value problem of
first-order differential equations for $\boldsymbol{a}^{(n+1)}(\tilde{t})$  and algebraic equations for
 $\boldsymbol{b}^{(n+1)}(\tilde{t})$   are contained
in each iteration step.
The starting iterates are chosen as $\boldsymbol{b}^{(0)}(\tilde{t})=0$
and $\boldsymbol{a}^{(0)}(\tilde{t})=\boldsymbol{a}^{(0)}(h)$  determined below.

On the other hand,  the reverse Picard iterations of initial values  \eqref{initial pl} and \eqref{initial mi}
  can also be written in a compact form
\begin{equation}\label{abs ini con}
\begin{aligned} &\boldsymbol{a}^{(n+1)}(h )=\boldsymbol{\nu}+P\boldsymbol{b}^{(n)}(h )+Q\boldsymbol{a}^{(n)}(h ),
\end{aligned}
\end{equation}
where $\boldsymbol{\nu}$ is defined by:
$\nu_j^{\pm\langle j\rangle
}=\frac{\omega_j}{\epsilon}\mathrm{e}^{-\mathrm{i}\omega_j
h }\Big(\frac{1}{2}(\boldsymbol{q}_1)_j \mp\frac{\textmd{i}}{2\omega_j}(\boldsymbol{p}_1)_j\Big)
$
 and by  the convergence result \eqref{F bound10}, it is immediately arrived  that   $\boldsymbol{\nu}$ is bounded in
$H^{s}$. The operators $P$ and $Q$   are defined by
\begin{equation}\label{PQ}
\begin{aligned}
(P\boldsymbol{b})_j^{\pm\langle j\rangle}(h ):=&-\frac{\omega_j}{2\epsilon
}\mathrm{e}^{-\mathrm{i}\omega_j
h }\sum\limits_{\boldsymbol{k} \neq \pm\langle j\rangle}\big(1\pm \frac{1}{\textmd{i}  \omega_j}\widehat{L}^{\boldsymbol{k}}\big)\frac{\epsilon^{[[\boldsymbol{k}]]}}{\boldsymbol{\omega}^{|\boldsymbol{k}| }} \mathrm{e}^{\mathrm{i}(\boldsymbol{k} \cdot\boldsymbol{\omega})
h }b_j^{\boldsymbol{k}}(h ),\
 (Q\boldsymbol{a})_j^{\pm\langle
j\rangle}(h ):=\frac{\pm\epsilon}{\textmd{i}   \omega_j}
\Big(  \dot{a}_j^{\pm\langle
j\rangle}(h )+\cdots\Big).
\end{aligned}
\end{equation}
For the starting values of \eqref{abs ini con}, we choose
  $\boldsymbol{a}^{(0)}(h ) = \boldsymbol{\nu}$ and $\boldsymbol{b}^{(0)}(h ) =
0$. 
In these reverse Picard iterations, we terminate the procedure when $n=4N.$

 $\bullet$ \textbf{Proof of \eqref{MFE-trigonometric}.}
So far, we have presented the reverse Picard iterations \eqref{Abstruct Pic ite}-\eqref{abs ini con} of the modulation functions and initial values. These and \eqref{MFE-zetaeta} show the  existence and uniqueness of \eqref{MFE-trigonometric}. In the rest parts of this step, we are going to show the bounds \eqref{bound TMFE}-\eqref{bound modu func} of the   expansions and modulation functions. To this end, we need to
deduce the estimates of   the functions  and operators used in
\eqref{Abstruct Pic ite}-\eqref{abs ini con}.

 $\bullet$ \textbf{Estimates of $\mathbf{F},\mathbf{G}$ \eqref{FG}.}   From the results given in Section 3.5 of  \cite{Cohen08},
it follows that
\begin{equation*}
\begin{aligned} &
\sum\limits_{|j|\leq M} \norm{\boldsymbol{g}^{\pm\langle j\rangle
}(\tilde{\boldsymbol{\zeta}})}_{H^{s}}^2\leq \epsilon^3 P_1(|||\tilde{\boldsymbol{\zeta}}|||_{H^{s}}^2),\ \ \sum\limits_{\boldsymbol{k}\in \mathcal{K}}
\norm{\boldsymbol{g}^{\boldsymbol{k}}(\tilde{\boldsymbol{\zeta}})}_{H^{s}}^2\leq \epsilon P_2(|||\tilde{\boldsymbol{\zeta}}|||_{H^{s}}^2),
\end{aligned}
\end{equation*}
where $P_1$ and $P_2$ are both polynomials with coefficients bounded
independently of $\epsilon, h $ and $M$.
Based on these inequalities and \eqref{inequa}, it is easy to get the bounds of $\mathbf{F},\mathbf{G}$ defined in \eqref{FG}:
$|||\mathbf{F}|||_{H^{s}}\leq C\epsilon^{1/2}, |||\mathbf{G}|||_{H^{s}}\leq C.$
 Moreover,    we deduce that
$$ |||\Upsilon^{-1} \Lambda^{-1}\mathbf{F}|||_{H^{s}}\leq|||\frac{1}{\pm \textmd{i}\epsilon}
\frac{h }{\sin (h \omega_j)}\frac{\omega_j}{2\omega_j} \boldsymbol{g}^{\pm\langle j\rangle
}|||_{H^{s}}\leq C.$$

$\bullet$  \textbf{Estimates of $A,B$ \eqref{AB}.} In view of   \eqref{inequa}, it is immediately arrived that
\begin{equation}\label{AB bound}
\begin{aligned} &|||A\boldsymbol{a}|||_{H^{s}}\leq C
\sum\limits_{l=2}^N h ^{l-2}\epsilon^{l-3/2}|||\boldsymbol{a}^{(l)}|||_{H^{s}},\ \
 |||B\boldsymbol{b}|||_{H^{s}}\leq C \epsilon^{1/2}|||   \dot{\boldsymbol{b}} |||_{H^{s}}+C
\sum\limits_{l=2}^Nh ^{l-2}\epsilon^{l-1/2}||| \boldsymbol{b}^{(l)}|||_{H^{s}}.
\end{aligned}
\end{equation}

$\bullet$  \textbf{Estimates of $P,Q$ \eqref{PQ}.}
For the bound of $P$, we have
\begin{equation*}
\begin{aligned}
&|||(P\boldsymbol{b})(h )|||^2_{H^{s}}=\sum\limits_{\norm{\langle j\rangle} \leq K}
\sum\limits_{|j|\leq M}''\omega_j^{2s}
\abs{\frac{\omega_j}{2\epsilon }\sum\limits_{\boldsymbol{k} \neq \pm\langle
j\rangle}\big(1\pm \frac{1}{\textmd{i} h
\omega_j}\widehat{L}_j^{\boldsymbol{k}}\big)\frac{\epsilon^{[[\boldsymbol{k}]]}}{\boldsymbol{\omega}^{|\boldsymbol{k}| }}\mathrm{e}^{\mathrm{i}(\boldsymbol{k} \cdot\boldsymbol{\omega})
h }\boldsymbol{b}_j^{\boldsymbol{k}}(0)}^2\\
\leq&\frac{C}{4\epsilon^2 } \sum\limits_{|j|\leq
M}''  \omega_j^{2s+2} \sum\limits_{\boldsymbol{k} \neq \pm\langle
j\rangle}\abs{\frac{\sin(h  \omega_j)}{h  }\frac{\epsilon^{[[\boldsymbol{k}]]}}{\boldsymbol{\omega}^{|\boldsymbol{k}| }}\frac{\boldsymbol{b}_j^{\boldsymbol{k}}(0)}{\sinc(h  \omega_j)}}^2
\leq \frac{C}{4\epsilon^2 } \sum\limits_{|j|\leq
M}''\sum\limits_{\boldsymbol{k} \neq \pm\langle j\rangle}
\frac{\epsilon^{2[[\boldsymbol{k}]]}}{\boldsymbol{\omega}^{2|\boldsymbol{k}| }}  \sum\limits_{\boldsymbol{k} \neq
\pm\langle j\rangle}\omega_j^{2s+2}\abs{\frac{ \boldsymbol{b}_j^{\boldsymbol{k}}(0)}{\sinc(h  \omega_j)}}^2\\
\leq&C \sum\limits_{\boldsymbol{k} \neq \pm\langle j\rangle}
\frac{\epsilon^{2[[\boldsymbol{k}]]}}{4\epsilon^2 }  \boldsymbol{\omega}^{-2|\boldsymbol{k}| }
|||(\Lambda \boldsymbol{b})(0)|||^2_{H^{1+s}}
\leq C  |||(\Upsilon^{-1}\Lambda \boldsymbol{b})(h)|||^2_{1+H^{s}},
\end{aligned}
\end{equation*}
where we used  the Cauchy-Schwarz inequality as well as  the result
$\sum\limits_{\norm{\boldsymbol{k}}\leq
K}\boldsymbol{\omega}^{-2|\boldsymbol{k}| }\leq C$ (Lemma 2 of \cite{Cohen08}). The bound of $Q$ is obvious as
$|||(Q\boldsymbol{a})(h )|||_{H^{s}}\leq
 C
\sum\limits_{l=1}^N h ^{l-1}\epsilon^{l}|||\boldsymbol{a}^{(l)}|||_{H^{s}}.
$

$\bullet$  \textbf{Proof of \eqref{bound modu func} and \eqref{bound TMFE}.}
By  the above estimates, the  condition \eqref{inequa}
and the  iterative  procedure, we can prove   by induction that
the iterates $\boldsymbol{a}^{(n)},\ \boldsymbol{b}^{(n)}$  and their derivatives
with respect to the slow time $\tilde{t}$   satisfy
\begin{equation}\label{Bounds func zeta}
\begin{array}{ll}   &|||\boldsymbol{a}^{(n)}(h )|||_{H^{s}}\leq C,\   |||\Lambda  \dot{\boldsymbol{a}}^{(n)}(\epsilon t)|||_{H^{s}}\leq
C\epsilon^{1/2},\ \
  |||\Upsilon^{-1} \dot{\boldsymbol{a}}^{(n)}(\epsilon t )|||_{H^{s}}\leq C,\
 |||\Upsilon^{-1}\Lambda \boldsymbol{b}^{(n)} (\epsilon t )|||_{H^{s}}\leq C,
\end{array}
\end{equation}
where $n=1,2,\ldots,4N$ and $C$ is independent of $\epsilon, h, M$ but dependent on $N$.
In what follows, we use the abbreviations $\boldsymbol{a}$ and $\boldsymbol{b}$ to denote the $(4N)$-th iterates.
 These  estimates  immediately lead to  \begin{equation}\label{Bound an}\begin{aligned}&|||\boldsymbol{a}(\epsilon t)|||_{H^{s}}\leq C,\ \  |||\boldsymbol{b}  (\epsilon t )|||_{H^{1+s}}
\lesssim|||\Lambda\boldsymbol{b}  (\epsilon t )|||_{H^{s}}
  =  |||\Upsilon \Upsilon^{-1}\Lambda\boldsymbol{b}  (\epsilon t )|||_{H^{s}}
\lesssim ||| \Upsilon^{-1}\Lambda\boldsymbol{b}  (\epsilon t )|||_{H^{s}}\lesssim C,
\end{aligned}\end{equation}
which shows
the statement \eqref{bound modu func} and the result   \begin{equation}\label{Bounds am0}
\begin{array}{ll}   &|||  \boldsymbol{a}  (\epsilon t )-\boldsymbol{a}(h )+\boldsymbol{b}  (\epsilon t )|||_{H^{1+s}}
\lesssim
|||  \boldsymbol{\dot{a}}  ( \epsilon t_s )|||_{H^{1+s}}+|||\boldsymbol{b}  (\epsilon t )|||_{H^{1+s}}
\lesssim
  |||\Lambda \boldsymbol{\dot{a}}  ( \epsilon t_s )|||_{H^{s}}+C
\lesssim C
\end{array}
\end{equation}
with some intermediate time value $t_s $ which may vary line by line.
Based on \eqref{Bounds am0} and \eqref{Bounds func zeta},
the estimate \eqref{bound TMFE} of $\boldsymbol{\tilde{q}}(t)$ is deduced as follows
\begin{equation*}
\begin{aligned}
\norm{\boldsymbol{\tilde{q}}(t)}^2_{H^{1+s}}
\lesssim&  \sum\limits_{|j|\leq
M}''\omega_j^{2+2s}\Big(\frac{\epsilon }{\omega_j}\big(\abs{a_j^{\langle j\rangle}(h )}+\abs{a_j^{-\langle j\rangle}(h )}\big)
+   \sum\limits_{\norm{\boldsymbol{k}}\leq2N} \frac{\epsilon^{[[\boldsymbol{k}]]}}{\boldsymbol{\omega}^{|\boldsymbol{k}| }} \abs{a_j^{\boldsymbol{k}}(\epsilon t)+b_j^{\boldsymbol{k}}(\epsilon t)-a_j^{\boldsymbol{k}}(h )}\Big)^2 \\
\lesssim&  4\epsilon^2 |||\boldsymbol{a}(h )|||_{H^{s}}+ \epsilon^2 \sum\limits_{|j|\leq
M}''\omega_j^{2+2s} \sum\limits_{\norm{\boldsymbol{k}}\leq2N}   \abs{a_j^{\boldsymbol{k}}(\epsilon t)+b_j^{\boldsymbol{k}}(\epsilon t)-a_j^{\boldsymbol{k}}(h )}^2\\
\lesssim&  4\epsilon^2 |||\boldsymbol{a}(h )|||_{H^{s}}+ \epsilon^2 |||  \boldsymbol{a}  (\epsilon t )-\boldsymbol{a}(h )+\boldsymbol{b}  (\epsilon t )|||_{H^{1+s}}\lesssim \epsilon^2.
\end{aligned}
\end{equation*}
The result \eqref{bound TMFE} of $\boldsymbol{\tilde{p}}(t)$ is estimated in the $H^{s}$-norm and thus it is easy to
prove it by using the connection \eqref{MFE-zetaeta}, the bound  \eqref{Bound an}  and similar arguments presented above.

$\bullet$  \textbf{Proof of \eqref{jth TMFE}.}
To prove the statement  \eqref{jth TMFE},
we split it into three parts
 \begin{equation*}
\begin{aligned}
r_j=&\underbrace{\sum\limits_{\norm{\boldsymbol{k}}=1} \mathrm{e}^{\mathrm{i}(\boldsymbol{k}\cdot\boldsymbol{\omega}) t}\zeta_j^{\boldsymbol{k}}-\zeta_j^{\langle j\rangle}
\mathrm{e}^{\mathrm{i} \omega_j t}-\zeta_j^{-\langle
j\rangle} \mathrm{e}^{-\mathrm{i} \omega_j t}}_{=:\textmd{Part I}^{r}}
+\underbrace{\sum\limits_{\norm{\boldsymbol{k}}=2} \mathrm{e}^{\mathrm{i}(\boldsymbol{k}\cdot\boldsymbol{\omega}) t}\zeta_j^{\boldsymbol{k}}}_{=:\textmd{Part II}^{r}}+\underbrace{\sum\limits_{3\leq \norm{\boldsymbol{k}}\leq 2N} \mathrm{e}^{\mathrm{i}(\boldsymbol{k}\cdot\boldsymbol{\omega}) t}\zeta_j^{\boldsymbol{k}}}_{=:\textmd{Part III}^{r}}.
\end{aligned}
\end{equation*}
For the Part III$^{r}$,  the bound \eqref{bound modu func} gives that  $\norm{\textmd{Part III}^{r}}_{H^{1+s}}\leq C\epsilon^2.$
Considering
$$(\Upsilon^{-1}\Lambda \boldsymbol{b}^{(n+1)})^{\boldsymbol{k}}=\mathbf{G}^{\boldsymbol{k}}(\boldsymbol{a}^{(n)},\boldsymbol{b}^{(n)})-\big(\Upsilon^{-1}\Lambda B\boldsymbol{b}^{(n)}(\tilde{t})\big)^{\boldsymbol{k}}  \ \ \textmd{with}\ \ \boldsymbol{b}^{(0 )}=0,$$
and the fact that $$\sum\limits_{\norm{\boldsymbol{k}}=1}\norm{G_l^{\boldsymbol{k}}}_{H^{s}}=\sum\limits_{ \boldsymbol{k}=\langle j\rangle}\norm{-\frac{h ^2(\omega_l+|\boldsymbol{k}\cdot \boldsymbol{\omega}|)}{4s_{\langle l\rangle+\boldsymbol{k}}s_{\langle
l\rangle-\boldsymbol{k}}}\tilde{g}_l^{\boldsymbol{k}}\big(\tilde{\boldsymbol{\zeta}}\big)}_{H^{s}} \lesssim
\sum\limits_{ |j|\leq
M }\norm{ \frac{1}{\sqrt{\epsilon}}\tilde{g}_l^{\langle j\rangle}\big(\tilde{\boldsymbol{\zeta}}\big)}_{H^{s}}
 \lesssim \epsilon,$$
we get
$\sum\limits_{\norm{\boldsymbol{k}}=1}\norm{(\Upsilon^{-1}\Lambda \boldsymbol{b})^{\boldsymbol{k}}}_{H^{s}}^2 \leq C \epsilon^2
$ and this yields $\norm{\textmd{Part I}^{r}}_{H^{1+s}}\leq C\epsilon^2.$
Based on   the assumption \eqref{another-non-res cond} and the  iterative  procedure, we can find
$\sum\limits_{|j|\leq
M}\sum\limits_{j_1+j_2=j}\sum\limits_{\boldsymbol{k}=\pm\langle
j_1\rangle\pm\langle j_2\rangle}
 \omega_j^{2}|b_j^{\boldsymbol{k}}|^2 \leq C \epsilon$
 and by which,  $\norm{\textmd{Part II}^{r}}_{H^{1+s}}\leq C\epsilon^2.$ The proof of
 \eqref{jth TMFE} is  complete.

 \textbf{Step 3. Defect and remainder.}

$\bullet$  \textbf{Defect in $\boldsymbol{\tilde{q}}$.} We define the defect in \eqref{methods} $$\boldsymbol{\delta}(t):=1/h ^2\sinc^{-1}(  h \Omega) \big(\boldsymbol{\tilde{q}}(t+h )-2\cos(h \Omega)\boldsymbol{\tilde{q}}(t)+\boldsymbol{\tilde{q}}(t-h ) \big)- \tilde{f}\big( \boldsymbol{\tilde{q}}(t)\big).$$
By the construction of the coefficient functions $\boldsymbol{\zeta}^{\boldsymbol{k}}$ of the modulated Fourier expansion $\boldsymbol{\tilde{q}}$, it is seen that the defect comes from three aspects:  the truncation of
the  operator $L^{\boldsymbol{k}}$ after the $\epsilon^N$ term \eqref{AB} (denoted by $\tilde{L}^{\boldsymbol{k}}$), the truncation   of the Taylor expansion of
$f$ after $N$ terms  \eqref{gfin r}, and the finite $4N$ iterations
of \eqref{Abstruct Pic ite} and \eqref{abs ini con} (denoted by $\zeta^{\boldsymbol{k}}_j=(\zeta^{\boldsymbol{k}}_j)^{(4N)}$).
Based on these discussions, we can rewrite the defect in a new form
\begin{equation}\label{dk}
\begin{aligned}
&\boldsymbol{\delta}(t)=\sum\limits_{\norm{\boldsymbol{k}}\leq NK}\boldsymbol{d}^{\boldsymbol{k}}(\epsilon t)
e^{\textmd{i}(\boldsymbol{k}\cdot\boldsymbol{\omega}) t}+R_{N+1}(t),
\end{aligned}
\end{equation}
where
\begin{equation}\label{djk}
\begin{aligned}
d_j^{\boldsymbol{k}}=&\frac{\tilde{L}^{\boldsymbol{k}} \zeta_j^{\boldsymbol{k}}}{h ^2\sinc(  h \omega_j) }-\sum\limits_{m=
2}^N\frac{f^{(m)}(0)}{m!}
\sum\limits_{\boldsymbol{k}^1+\cdots+\boldsymbol{k}^m=\boldsymbol{k}}\sum\limits_{j_1+\cdots+j_m\equiv j
\textmd{mod}2M}'  \big(\zeta_{j_1}^{\boldsymbol{k}^1}\cdot\ldots\cdot
\zeta_{j_m}^{\boldsymbol{k}^m}\big),
\end{aligned}
\end{equation}
and the function
$R_{N+1}$  contains the remainder terms of the Taylor expansion of
$f$ after $N$ terms and of  $L^{\boldsymbol{k}}$ after the $\epsilon^N$ term.  We consider   $\norm{\boldsymbol{k}}\leq NK$, and $\zeta_j^{\boldsymbol{k}}=0$ for
$\norm{\boldsymbol{k}}>K:=2N$.
For the part $R_{N+1}$, by using the bound \eqref{bound TMFE} for the remainder
in the Taylor expansion of $f$ and the estimate \eqref{Bounds func
zeta}  for the $(N + 1)$-th derivative for $\zeta_j^{\boldsymbol{k}}$, it is immediately obtained that
 $\norm{R_{N+1}}_{H^{s}} \leq
C\epsilon^{N+1}.$
For the other part in \eqref{dk},   Section 3.8 of \cite{Cohen08} and Section 6.7 of  \cite{Cohen08-1}
yield that
$\norm{\sum\limits_{\norm{\boldsymbol{k}}\leq NK}\boldsymbol{d}^{\boldsymbol{k}}(\epsilon t)
e^{\textmd{i}(\boldsymbol{k}\cdot\boldsymbol{\omega}) t}}_{H^{s}}^2\leq C
\sum\limits_{\norm{\boldsymbol{k}}\leq NK}\norm{\boldsymbol{\omega}^{|\boldsymbol{k}| }\boldsymbol{d}^{\boldsymbol{k}}(\epsilon
t)}_{H^{s}}^2.
$
In what follows,  we shall show that
\begin{equation}\label{right hand esti}
\begin{aligned}
& \sum\limits_{\norm{\boldsymbol{k}}\leq NK}\norm{\boldsymbol{\omega}^{|\boldsymbol{k}| }\boldsymbol{d}^{\boldsymbol{k}}(\epsilon
t)}_{H^{s}}^2\leq C\epsilon^{2(N+1)}
\end{aligned}
\end{equation}
for three modes: truncated case ($\norm{\boldsymbol{k}}>K:=2N$), near-resonant case ($(j, \boldsymbol{k}) \in\mathcal{R}_{\epsilon,h }$) and non-resonant  case ($(j, \boldsymbol{k}) \in\mathcal{S}_{\epsilon,h }$).
Keeping in mind the fact $\zeta_j^{\boldsymbol{k}}=0$ for the first two  modes, the  defect becomes
\begin{equation*}
\begin{aligned}
d_j^{\boldsymbol{k}}=&\sum\limits_{m= 2}^N\frac{f^{(m)}(0)}{m!}
\sum\limits_{\boldsymbol{k}^1+\cdots+\boldsymbol{k}^m=\boldsymbol{k}}\sum\limits_{j_1+\cdots+j_m\equiv j\
\textmd{mod}\ 2M}' \big(\zeta_{j_1}^{\boldsymbol{k}^1}\cdot\ldots\cdot
\zeta_{j_m}^{\boldsymbol{k}^m}\big),
\end{aligned}
\end{equation*}
and  the same deduction of  \cite{Cohen08-1} shows \eqref{right hand esti}\footnote{The condition  \eqref{non-resonance cond} is used here.}. Thus, the remaining work is to prove
 \eqref{right hand esti} under the condition $(j, \boldsymbol{k}) \in\mathcal{S}_{\epsilon,h }$. To this end, we express the defect \eqref{djk} in the scaled variables
$
\boldsymbol{\omega}^{|\boldsymbol{k}| }d_j^{\boldsymbol{k}}= \epsilon^{[[\boldsymbol{k}]]}
\Big(\frac{1}{h ^2\sinc(  h \omega_j) }\tilde{L}_j^{\boldsymbol{k}}
\tilde{\zeta}_j^{\boldsymbol{k}}-g_j^{\boldsymbol{k}}\big(\tilde{\boldsymbol{\zeta}} \big)\Big),
$
and split it  into  two parts
\begin{equation}\label{tps}
\begin{array}{ll}
&\omega_jd_j^{\pm\langle j\rangle}= \epsilon \Big( \pm2
\textmd{i} \epsilon    \omega_j
\big(\dot{a}_j^{\pm\langle j\rangle}+(A\boldsymbol{a})_j^{\pm\langle
j\rangle}\big)-g_j^{\pm\langle j\rangle}\big(\tilde{\boldsymbol{\zeta}} \big)\Big),\\
&\boldsymbol{\omega}^{|\boldsymbol{k}| }d_j^{\boldsymbol{k}}=\epsilon^{[[\boldsymbol{k}]]}
\Big(\frac{4  s_{\langle
j\rangle+\boldsymbol{k}}s_{\langle
j\rangle-\boldsymbol{k}}}{h ^2\sinc(  h \omega_j) }\big(\boldsymbol{b}_j^{\boldsymbol{k}}+(B\boldsymbol{b})_j^
{\boldsymbol{k}}\big)-g_j^{\boldsymbol{k}}\big(\tilde{\boldsymbol{\zeta}}
\big)\Big).
\end{array}
\end{equation}
It is noted that the functions here are actually the results produced by  the $4N$-th  iteration. Reformulating
$g_j^{\pm\langle j\rangle}\big(\tilde{\boldsymbol{\zeta}}(t\epsilon)\big)$ and
$g_j^{\boldsymbol{k}}\big(\tilde{\boldsymbol{\zeta}}(t\epsilon)\big)$ in the form of $\mathbf{F},\mathbf{G}$ \eqref{FG} and then
inserting them from \eqref{Abstruct Pic ite} into \eqref{tps}, one gets
\begin{equation*}
\begin{array}{ll}
\omega_jd_j^{\pm\langle j\rangle}= 2 \omega_j \mu_j^{\pm\langle
j\rangle}\big( \big[ \dot{a}_j^{\pm\langle
j\rangle}\big]^{(4N)}-\big[\dot{a}_j^{\pm\langle
j\rangle}\big]^{(4N+1)}\big), \ \ \ & \nu_j^{\pm\langle
j\rangle}=\pm  \textmd{i}\epsilon^2,\\
\boldsymbol{\omega}^{|\boldsymbol{k}| }d_j^{\boldsymbol{k}}= \nu_j^{\boldsymbol{k}}\big([b_j^{\boldsymbol{k}}]^{(4N)}-
[b_j^{\boldsymbol{k}} ]^{(4N+1)}\big), \ \ \ &
\nu_j^{\boldsymbol{k}}=\epsilon^{[[\boldsymbol{k}]]}\frac{4
s_{\langle j\rangle+\boldsymbol{k}}s_{\langle j\rangle-\boldsymbol{k}}}{h ^2\sinc(  h \omega_j) }.
\end{array}
\end{equation*}
We have noticed that the form of these formulae is very closed  to that in
Section 6.9 of \cite{Cohen08-1} and then    \eqref{right hand esti} is obtained in a same way of \cite{Cohen08-1}.
The above analysis shows  $
 \norm{\boldsymbol{\delta}(t)}_{H^{s}}\leq C\epsilon^{N+1}$  for $h\leq  t\leq
\epsilon^{-1}.$

$\bullet$  \textbf{Defect in $\boldsymbol{\tilde{p}}$ and  initial values.} For another defect in the method $\boldsymbol{p}_n$:
 $$\boldsymbol{\rho}(t):=\frac{\boldsymbol{\tilde{p}}(t+h )+2h \Omega^2 \sinc( h \Omega) \boldsymbol{\tilde{q}}(t)-\boldsymbol{\tilde{p}}(t-h )}{h \big(\cos(h \Omega) +\sinc(  h \Omega)\big) }- \tilde{f}\big( \boldsymbol{\tilde{q}}(t)\big),$$
it can be expressed  as
  $$\begin{aligned}\boldsymbol{\rho}(t)=&\frac{\boldsymbol{\tilde{p}}(t+h )+2h \Omega^2 \sinc( h \Omega) \boldsymbol{\tilde{q}}(t)-\boldsymbol{\tilde{p}}(t-h )}{h \big(\cos(h \Omega) +\sinc(  h \Omega)\big) }-\frac{\boldsymbol{\tilde{q}}(t+h )-2\cos(h \Omega)\boldsymbol{\tilde{q}}(t)+\boldsymbol{\tilde{q}}(t+h )}{h ^2\sinc(  h \Omega) }+\boldsymbol{\delta}(t).\end{aligned}$$
By the result \eqref{methodpq rea}, we have $\boldsymbol{\rho}(t)=\boldsymbol{\delta}(t)$ and thus $\norm{\boldsymbol{\rho}(t)}_{H^{s}}\leq C\epsilon^{N+1}$.
For  the defect in the initial conditions
\eqref{initial pl} and \eqref{initial mi},  in a similar way  one gets $
\norm{\boldsymbol{q}_1-\boldsymbol{\tilde{q}}(h)}_{H^{1+s}}+\norm{\boldsymbol{p}_1-\boldsymbol{\tilde{p}}(h)}_{H^{s}}\leq
C\epsilon^{N+1}.$

$\bullet$  \textbf{Proof of the remainder \eqref{error MFE}.} Let $e^{\boldsymbol{q}}_n=\boldsymbol{\tilde{q}}(t_n)-\boldsymbol{q}_n,\ e^{\boldsymbol{p}}_n=\boldsymbol{\tilde{p}}(t_n)-\boldsymbol{p}_n$. We then have
 \begin{equation}\label{methodFerrormfe}
\begin{aligned}
&e^{\boldsymbol{q}}_{n+1}= 2\cos(h \Omega) e^{\boldsymbol{q}}_n-e^{\boldsymbol{q}}_{n-1}+\zeta_{n}^{\boldsymbol{q}}+\eta_{n}^{\boldsymbol{q}},\ \ e^{\boldsymbol{p}}_{n+1}= -2h \Omega^2 \sinc( h \Omega) e^{\boldsymbol{q}}_n+e^{\boldsymbol{p}}_{n-1}+\zeta_{n}^{\boldsymbol{p}}+\eta_{n}^{\boldsymbol{p}},
\end{aligned}
\end{equation}
where
 \begin{equation*}
\begin{aligned}
&\zeta_{n}^{\boldsymbol{q}}=h ^2\sinc(  h \Omega) \Delta \tilde{f}_n,\  \eta_{n}^{\boldsymbol{q}}=h ^2\sinc(  h \Omega) \boldsymbol{\delta}(t_n),\  \zeta_{n}^{\boldsymbol{p}}=h \big(\cos(h \Omega) +\sinc(  h \Omega)\big) \Delta \tilde{f}_n,\\
&  \eta_{n}^{\boldsymbol{p}}=h \big(\cos(h \Omega) +\sinc(  h \Omega)\big)\boldsymbol{\rho} (t_n),\ \ \ \Delta \tilde{f}_n= \tilde{f}(\boldsymbol{\tilde{q}} (t_n))-\tilde{f}(\boldsymbol{q}_n).
\end{aligned}
\end{equation*}Solving the first   equality yields
 \begin{equation}\label{rqf}
e_{n+1}^{\boldsymbol{q}}=W_{n-1}e_2^{\boldsymbol{q}}-W_{n-2}e_1^{\boldsymbol{q}}+ \sum_{k=1}^{n-1}W_{n-k}(\zeta_{k}^{\boldsymbol{q}}+\eta_{k}^{\boldsymbol{q}})
\end{equation} with $W_{n}:=\frac{\sin ( (n+1) h  \Omega )}{\sin( h  \Omega)}.$
In the analysis of this part,  we use the Lipschitz bound  (see \cite{Cohen08,Cohen08-1,Cohen08-0})
$$\norm{\tilde{f}(v)-\tilde{f}(w)}_{H^s}\lesssim \epsilon \norm{v-w}_{H^s}\ \textmd{for} \ v,w\in H^s\
\textmd{with}\
\norm{v}_{H^s}\lesssim \epsilon,\ \norm{w}_{H^s}\lesssim \epsilon,$$
which  is  yielded from the  Taylor expansion of the nonlinearity $\tilde{f}$  at $0$ and the fact that $H^s$ is a normed
algebra.
For the error $e_2^{\boldsymbol{q}}$, considering the one-step symmetric  scheme for the first formula of \eqref{method}, we have
 \begin{equation*}
\begin{aligned}\norm{e_2^{\boldsymbol{q}}}_{H^{1+s}}=&\norm{\cos(h \Omega) e_1^{\boldsymbol{q}}+h  \sinc( h \Omega) e_1^{\boldsymbol{p}}+h ^2/2 \sinc( h \Omega) \Big(\tilde{f}\big(\boldsymbol{\tilde{q}}(h)\big)-\tilde{f}\big( \boldsymbol{q}_1\big)\Big)}_{H^{1+s}}\\
 \lesssim&\norm{  e_1^{\boldsymbol{q}}}_{H^{1+s}}+   \norm{ e_1^{\boldsymbol{p}}}_{H^{s}}+h ^2/2 \epsilon \norm{ e_1^{\boldsymbol{q}}}_{H^{1+s}} \lesssim \epsilon^{N+1}.
  \end{aligned}
\end{equation*}
Therefore, we reach
  \begin{equation*}
\begin{aligned}
\norm{e_{n+1}^{\boldsymbol{q}}}_{H^{1+s}}
  \lesssim &(n-1) \epsilon^{N+1}+\sum_{k=1}^{n-1}h ^2 \big(  \norm{W_{n-k}\sinc(  h \Omega) (\Delta \tilde{f}_{k}+\boldsymbol{\delta}(t_k))}_{H^{1+s}}\big)\\
     \lesssim & \epsilon^{N}/h+\sum_{k=1}^{n-1}\big( h \norm{e_{k}^{\boldsymbol{q}}}_{H^{1+s}}+ h  \norm{\boldsymbol{\delta}(t_k)}_{H^{s}}\big)
      \lesssim  \epsilon^{N}/h+\sum_{k=1}^{n-1}  h   \norm{e_{k}^{\boldsymbol{q}}}_{H^{1+s}}.
\end{aligned}
\end{equation*}
This shows that $\norm{e_{n}^{\boldsymbol{q}}}_{H^{1+s}}  \lesssim \epsilon^{N}/h$ by Gronwall’s inequality.
The second equation of
\eqref{methodFerrormfe} gives
  \begin{equation*}
\begin{aligned}
\norm{e^{\boldsymbol{p}}_{n+1}}_{H^{s}}     \lesssim \norm{\sin( h \Omega) e^{\boldsymbol{q}}_n}_{H^{1+s}}+\norm{e^{\boldsymbol{p}}_{n-1}}_{H^{s}}+\norm{\zeta_{n}^{\boldsymbol{p}}}_{H^{s}}
+\norm{\eta_{n}^{\boldsymbol{p}}}_{H^{s}}.
\end{aligned}
\end{equation*}
From \eqref{rqf}, it follows that
  \begin{equation*}
\begin{aligned}
 &\norm{\sin( h \Omega) e_{n}^{\boldsymbol{q}}}_{H^{1+s}}\\
 \lesssim&  \norm{\sin ( (n-1) h  \Omega )e_2^{\boldsymbol{q}}}_{H^{1+s}}+ \norm{\sin ( (n-2) h  \Omega )e_1^{\boldsymbol{q}}}_{H^{1+s}} + \sum_{k=1}^{n-2} \norm{\sin ( (n-k+1) h  \Omega )(\zeta_{k}^{\boldsymbol{q}}+\eta_{k}^{\boldsymbol{q}})}_{H^{1+s}} \\ \lesssim& \epsilon^{N+1}
 + \sum_{k=1}^{n-2}h ^2 \epsilon \norm{e_{k}^{\boldsymbol{q}}}_{H^{1+s}}+ \sum_{k=1}^{n-2}  h\norm{\Omega^{-1}\boldsymbol{\delta}(t_k) }_{H^{1+s}} \lesssim   \epsilon^{N+1}
 + \sum_{k=1}^{n-2}h ^2 \epsilon \epsilon^{N}/h+ \sum_{k=1}^{n-2}  h\epsilon^{N+1}\lesssim  \epsilon^{N}.
\end{aligned}
\end{equation*}
Combining the above two estimates, one has
  \begin{equation*}
\begin{aligned}
\norm{e^{\boldsymbol{p}}_{n+1}}_{H^{s}}     \lesssim \norm{e^{\boldsymbol{p}}_{n-1}}_{H^{s}}+ \epsilon^{N}+h\epsilon \epsilon^{N}/h+
h\epsilon^{N+1} \lesssim \norm{e^{\boldsymbol{p}}_{n-1}}_{H^{s}}+ \epsilon^{N},
\end{aligned}
\end{equation*}
and then  $\norm{e_{n}^{\boldsymbol{p}}}_{H^{s}}  \lesssim \epsilon^{N-1}/h$ by Gronwall’s inequality. So far the remainder \eqref{error MFE} has been shown which completes the whole proof of Lemma \ref{MFE lem}.
\end{proof}

\subsubsection{Almost-invariants} \label{subsec:alm inv}
 In
this section, we shall   show that the modulated Fourier expansion
\eqref{MFE-trigonometric} has three almost-invariant  which are closed to the
 energy,  actions
and   momentum of \eqref{prob}.
\begin{mylemma}\label{invariant12 thm}
For the modulated Fourier expansion \eqref{MFE-trigonometric}, we collect all the coefficient functions in
$\vec{\boldsymbol{\zeta}}=\big(\boldsymbol{\zeta}^{\mathbf{k}}\big)_{\mathbf{k}}$.
Under the conditions of Theorem \ref{main theo1},    there exist  three functions
$\mathcal{H}[\vec{\boldsymbol{\zeta}}],\ \mathcal{J}_{l}[\vec{\boldsymbol{\zeta}}],\ \mathcal{K}[\vec{\boldsymbol{\zeta}}]$ such that
\begin{equation}\label{AIR1}
\begin{aligned}
  \left|\frac{d}{d \tilde{t}}
\mathcal{H}[\vec{\boldsymbol{\zeta}}](\tilde{t})\right|\leq C\epsilon^{N+1},\
\sum\limits_{l=1}^M \omega_l^{2s+1} \left|\frac{d}{d \tilde{t}}
\mathcal{J}_{l}[\vec{\boldsymbol{\zeta}}](\tilde{t})\right|\leq C\epsilon^{N+1},\ \left|\frac{d}{d \tilde{t}} \mathcal{K}[\vec{\boldsymbol{\zeta}}](\tilde{t})\right|\leq
C(\epsilon^{N+1}+\epsilon^{2}M^{-s+1}),
\end{aligned}
\end{equation}
where $h\leq\tilde{t}=\epsilon t \leq1.$
 Moreover, they have the following relations along the
numerical solution
\begin{equation}\label{AIR2}
\begin{aligned}
&\mathcal{H}[\vec{\boldsymbol{\zeta}}](\epsilon t_n)=H_M(\boldsymbol{q}^n ,
\boldsymbol{p}^n)+\mathcal{O}(\epsilon^3), \quad  \mathcal{J}_{l}[\vec{\boldsymbol{\zeta}}](\epsilon t_n)=J_{l}(\boldsymbol{q}^n ,
\boldsymbol{p}^n)+\gamma_l(t_n)\epsilon^3,\\
&\mathcal{K}[\vec{\boldsymbol{\zeta}}](\epsilon t_n)=K(\boldsymbol{q}^n ,
\boldsymbol{p}^n)+\mathcal{O}(\epsilon^3)+\mathcal{O}(\epsilon^2 M^{-s}),
\end{aligned}
\end{equation}
where $1\leq n  \leq\frac{1}{\epsilon h}$, $J_{l}: = I_l + I_{-l}$ for  $0 < l <
M,  J_0: = I_0,  J_M := I_M$ and  the constants are independent of $\epsilon, M, h $ and
$n$.
\end{mylemma}
\begin{proof}
The proof will only be presented on the energy conservation  for simplicity, and the statement on actions
and   momentum can be derived in the same way as \cite{W23}.

$\bullet$  \textbf{Proof of \eqref{AIR1}.}
Define $\boldsymbol{\boldsymbol{z}}^{\boldsymbol{k}}(t)=\mathrm{e}^{\mathrm{i}(\boldsymbol{k}\cdot\boldsymbol{\omega}) t}\boldsymbol{\zeta}^{\boldsymbol{k}}(\epsilon t)$ and $\vec{\boldsymbol{z}}(t)=\big(z^{\mathbf{k}}(t)\big)_{\mathbf{k}}$.
According to the proof of   Theorem \ref{MFE lem},  the defect
formula \eqref{djk} can be reformulated as
\begin{equation}\label{djk pote}
\begin{aligned}
&\frac{1}{h ^2\sinc(  h \omega_j)}\tilde{L}^{\boldsymbol{k}}(\epsilon
h  D)
z_j^{\boldsymbol{k}}(t)+\nabla_{-j}^{-\boldsymbol{k}}\mathcal{U}
(\vec{\boldsymbol{z}}(t))=\mathrm{e}^{\mathrm{i}(\boldsymbol{k}\cdot\boldsymbol{\omega}) t}d_j^{\boldsymbol{k}}(\epsilon t),
\end{aligned}
\end{equation}
where $\nabla_{-j}^{-\boldsymbol{k}}\mathcal{U}(\vec{\boldsymbol{z}})$ is the partial derivative
with respect to $\zeta_{-j}^{-\boldsymbol{k}}$ of the extended potential
(\cite{Cohen08-1})
\begin{equation*}
\begin{aligned}
&\mathcal{U}(\vec{\boldsymbol{z}})=\sum\limits_{l=-N}^N\mathcal{U}_l(\vec{\boldsymbol{z}}),\ \
 \mathcal{U}_l(\vec{\boldsymbol{z}})=\sum\limits_{m= 2}^N
\frac{U^{(m+1)}(0)}{(m+1)!}
\sum\limits_{\boldsymbol{k}^1+\cdots+\boldsymbol{k}^{m+1}=0}\sum\limits_{j_1+\cdots+j_{m+1}=2Ml}'
\big(z_{j_1}^{\boldsymbol{k}^1}\cdot\ldots\cdot
z_{j_{m+1}}^{\boldsymbol{k}^{m+1}}\big).
\end{aligned}
\end{equation*}
Here $U$ is the potential appearing in the energy function and $\norm{\boldsymbol{k}^i}\leq
2N$ and $| j_i| \leq M$ for $i=1,2,\ldots,m+1$.

The almost-invariant is obtained by considering
\begin{equation*}
\begin{aligned}
& \epsilon\frac{d}{d\tilde{t}} \mathcal{U}(\vec{\boldsymbol{z}}(t))=\frac{d}{dt} \mathcal{U}(\vec{\boldsymbol{z}}(t))
= \sum\limits_{\norm{\boldsymbol{k}}\leq K}\sum\limits_{|j|\leq M}'
\dot{z}_{-j}^{-\boldsymbol{k}}(t) \nabla_{-j}^{-\boldsymbol{k}}\mathcal{U}(\vec{\boldsymbol{z}}(t)).
\end{aligned}
\end{equation*}
 According to \eqref{djk pote} and the above formula,
we have
\begin{equation*}
\begin{aligned}
 \epsilon\frac{d}{d\tilde{t}} \mathcal{U}(\vec{\boldsymbol{\zeta}}(t))
=  &  \sum\limits_{\norm{\boldsymbol{k}}\leq K}\sum\limits_{|j|\leq M}' \big
(\textmd{i}(\boldsymbol{k}\cdot\boldsymbol{\omega})\zeta_{-j}^{-\boldsymbol{k}}(\tilde{t})+
 \epsilon\dot{\zeta}_{-j}^{-\boldsymbol{k}}(\tilde{t}) \big)^\intercal
\Big (d_j^{\boldsymbol{k}}(\tilde{t})-\frac{\tilde{L}^{\boldsymbol{k}}(\epsilon
h  D +i \epsilon
h (\boldsymbol{k}\cdot\boldsymbol{\omega})) }{h ^2\sinc(  h \omega_j)}\zeta_j^{\boldsymbol{k}}(\tilde{t})\Big).
\end{aligned}
\end{equation*}
By the expansion  \eqref{new L2r} of the operator $\tilde{L}^{\boldsymbol{k}}$,
it is known that $\tilde{L}^{\boldsymbol{k}}(\epsilon
h  D +i \epsilon
h (\boldsymbol{k}\cdot\boldsymbol{\omega})) \zeta_j^{\boldsymbol{k}}$ has the following form
\begin{equation*}
\begin{aligned}
\tilde{L}^{\boldsymbol{k}}(\epsilon
h  D +i \epsilon
h (\boldsymbol{k}\cdot\boldsymbol{\omega})) \zeta_j^{\boldsymbol{k}}=\varsigma_0\zeta_j^{\boldsymbol{k}}+\textmd{i}\varsigma_1\dot{\zeta}_j^{\boldsymbol{k}}+\varsigma_2\ddot{\zeta}_j^{\boldsymbol{k}}
+\textmd{i}\varsigma_3 (\zeta_j^{\boldsymbol{k}})^{(3)}+\cdots
\end{aligned}
\end{equation*}
with all  $\varsigma_m\in \mathbb{R}$. Considering $\zeta_{-j}^{-\boldsymbol{k}}=\overline{\zeta_j^{\boldsymbol{k}}}$ and the formulae from  p. 508 of \cite{hairer2006},
it is obtained that the part
$$\sum\limits_{\norm{\boldsymbol{k}}\leq K}\sum\limits_{|j|\leq M}' \big
(\textmd{i}(\boldsymbol{k}\cdot\boldsymbol{\omega})\zeta_{-j}^{-\boldsymbol{k}}(\tilde{t})+
 \epsilon\dot{\zeta}_{-j}^{-\boldsymbol{k}}(\tilde{t}) \big)^\intercal
 \frac{\tilde{L}^{\boldsymbol{k}}(\epsilon
h  D +i \epsilon
h (\boldsymbol{k}\cdot\boldsymbol{\omega}))}{h ^2\sinc(  h \omega_j)} \zeta_j^{\boldsymbol{k}}(\tilde{t})$$ is a total derivative of
a function w.r.t $\tilde{t}$. Thus,  there is a function $ \mathcal{H}[\vec{\zeta}](\tilde{t})$
such that
\begin{equation*}
\begin{aligned}
 \frac{d}{d \tilde{t}} \mathcal{H}[\vec{\boldsymbol{\zeta}}](\tilde{t})=&
\sum\limits_{\norm{\boldsymbol{k}}\leq K}\sum\limits_{|j|\leq M}' \big
(\textmd{i}(\boldsymbol{k}\cdot\boldsymbol{\omega})\zeta_{-j}^{-\boldsymbol{k}}(\tilde{t})+
 \epsilon\dot{\zeta}_{-j}^{-\boldsymbol{k}}(\tilde{t}) \big)^\intercal d_j^{\boldsymbol{k}}(\tilde{t}).
\end{aligned}
\end{equation*}
According to the bound \eqref{bound modu func} for the functions $\zeta_{-j}^{-\boldsymbol{k}},\dot{\zeta}_{-j}^{-\boldsymbol{k}}$ and the bound \eqref{right hand esti} for $d_j^{\boldsymbol{k}}$,
the statement   \eqref{AIR1} is got by the
Cauchy-Schwarz inequality immediately.

$\bullet$ \textbf{Proof of \eqref{AIR2}.}
With the expansion  \eqref{new L2r} of the operator $\tilde{L}^{\boldsymbol{k}}$,   the   bound \eqref{bound modu func} for the functions $\zeta_{-j}^{-\boldsymbol{k}}$ and the relation \eqref{impor-zetaeta} between $\eta_j^{\pm\langle j\rangle}$  and $\zeta_j^{\pm\langle j\rangle}$, the expression of the almost-invariant $ \mathcal{H}[\vec{\boldsymbol{\zeta}}]$ is obtained as
$
  \mathcal{H}[\vec{\boldsymbol{\zeta}}](\tilde{t})
 =\sum\limits_{|j|\leq M}' \omega_j^2\Big(
 |\zeta_j^{\langle j\rangle}|^2+ |\zeta_j^{-\langle j\rangle}|^2\Big)(\tilde{t})+\mathcal{O}(\epsilon^3).$
Meanwhile, inserting the modulated Fourier expansion \eqref{MFE-trigonometric} into the energy $H_M$ leads to
$$\begin{aligned}
H_M(\boldsymbol{q}_n,\boldsymbol{p}_n)&=H_M(\boldsymbol{\tilde{q}}(\tilde{t}),\boldsymbol{\tilde{p}}(\tilde{t})) +\mathcal{O}(\epsilon^{N-1}/h) =\sum\limits_{|j|\leq M}' \omega_j^2\Big(
 |\zeta_j^{\langle j\rangle}|^2+ |\zeta_j^{-\langle j\rangle}|^2\Big)+\mathcal{O}(\epsilon^3)+\mathcal{O}(\epsilon^{N-1}/h),\end{aligned}$$
where the results \eqref{jth TMFE} and \eqref{error MFE} are used. The above two results  and the arbitrarily $N$ immediately show \eqref{AIR2}.
\end{proof}
\subsubsection{Proof of long time conservations}
 \begin{proof}The above statements are on the interval $[0,\epsilon^{-1}]$ and they can be extended to a long interval
by patching together many short intervals of length $\epsilon^{-1}$ which is the same as  \cite{Cohen08,Cohen08-1,Cohen08-0}.
Then with the above preparations given in this section,  it is arrived that
\begin{equation*}
\begin{aligned}
\frac{H_M(\boldsymbol{q}_n,\boldsymbol{p}_n)}{\epsilon^2}&= \frac{ \mathcal{H}[\vec{\boldsymbol{\zeta}}](\epsilon t_n)}{\epsilon^2} + \mathcal{O}(\epsilon)=\frac{ \mathcal{H}[\vec{\boldsymbol{\zeta}}](\epsilon t_{n-1})}{\epsilon^2}+h  \epsilon\mathcal{O}(\epsilon^{N-1})  +\mathcal{O}(\epsilon)  =\cdots \\&=\frac{ \mathcal{H}[\vec{\boldsymbol{\zeta}}](\epsilon t_{1})}{\epsilon^2}+(n-1)h  \epsilon\mathcal{O}(\epsilon^{N-1})  +\mathcal{O}(\epsilon)
 =\frac{H_M(\boldsymbol{q}_0,\boldsymbol{p}_0)}{\epsilon^2}  +\mathcal{O}(h^3) +\mathcal{O}(\epsilon)
\end{aligned}
\end{equation*}
as long as $nh  \leq \epsilon^{-N+1}$, where the last
equation comes from the accuracy of the starting value $\boldsymbol{q}_1,\boldsymbol{p}_1$  \eqref{start value}  which has been well researched in \cite{LI23}.
This shows the statement of energy  on the interval $[0,\epsilon^{-N+1}]$ and the result of actions
and   momentum is obtained with the same arguments as above and \cite{Cohen08-1}. The proof of Theorem \ref{main theo1} is complete.
 \end{proof}




\begin{thebibliography}{99}

 \bibitem{Bambusi03}
 D. Bambusi, Birkhoff normal form for some nonlinear PDEs, {Comm. Math.
Phys.} 234 (2003)  253-285.


    \bibitem{Bao22}
{ W. Bao, Y. Cai,    Y. Feng}, Improved uniform error bounds on time-splitting methods for long-time dynamics of the nonlinear Klein-Gordon equation with weak nonlinearity, SIAM J. Numer. Anal.  60 (2022)  1962-1984.

   \bibitem{Bao14}  W. Bao, Y. Cai, X. Zhao, A uniformly accurate multiscale time integrator pseudospectral method for the
Klein-Gordon equation in the nonrelativistic limit regime, SIAM J. Numer. Anal. 52 (2014) 2488-2511.



\bibitem{PI1}
{ S. Baumstark, E. Faou,   K. Schratz}, Uniformly accurate exponential-type integrators for Klein-Gordon equations with asymptotic convergence to classical splitting schemes in the NLS splitting, Math. Comp. 87 (2018)  1227-1254.

\bibitem{S2}{ S. Baumstark, K. Schratz}, Uniformly accurate  oscillatory integrators for the Klein-Gordon-Zakharov systems from low-to high-plasma frequency regimes, SIAM J. Numer. Anal.  57  (2019)  429-457.




\bibitem{Buchholz2021} S. Buchholz, B. D\"{o}rich, M. Hochbruck, On averaged exponential integrators for semilinear Klein–Gordon equations with solutions of low-regularity, SN Part. Differ. Equ. Appl. 2 (2021) 2662-2963.

\bibitem{Buchholz2018}  S.  Buchholz, L.  Gauckler, V. Grimm,  M.   Hochbruck,   T.  Jahnke,  Closing
the gap between trigonometric integrators and splitting methods for
highly oscillatory differential equations, {IMA J. Numer. Anal.} 38
(2018) 57-74.

    \bibitem{S1} { M. Cabrera Calvo, K. Schratz}, Uniformly accurate low regularity integrators for the Klein-Gordon equation
from the classical to non-relativistic limit regime,
SIAM J. Numer. Anal.   60 (2022)  888-912.



\bibitem {Cano14} B. Cano, M.J. Moreta, Multistep cosine methods for second-order
partial differential systems, {IMA J. Numer. Anal.}  30 (2010)
431-461.



\bibitem{Chartier15}  { Ph. Chartier, N. Crouseilles, M. Lemou,  F. M\'{e}hats}, {Uniformly accurate numerical schemes for highly oscillatory Klein-Gordon and nonlinear Schr\"{o}dinger equations}, Numer. Math. 129 (2015)   211-250.


 \bibitem {Chen20} C. Chen, J. Hong, C. Sim, K. Sonwu, Energy and quadratic invariants preserving (EQUIP) multi-symplectic methods for Hamiltonian Klein–Gordon equations, J. Comput. Phys. 418 (2020) 10959.



\bibitem{Cohen08}D. Cohen, E. Hairer,   Ch. Lubich,
 Long-time analysis of nonlinearly perturbed wave equations via
modulated Fourier expansions, {Arch. Ration. Mech. Anal.}  187
(2008) 341-368.


\bibitem{Cohen08-1}D. Cohen, E. Hairer,   Ch. Lubich, Conservation of energy,
momentum and actions in numerical discretizations of nonlinear wave
equations, {Numer. Math.} 110 (2008) 113-143.


 \bibitem {Faou15} E. Faou, A. Ostermann,  K. Schratz, Analysis of exponential splitting methods for inhomogeneous
parabolic equations, {IMA J. Numer. Anal.}  35 (2012) 161-178.



 \bibitem {Gauckler15} L. Gauckler. Error analysis of trigonometric integrators for semilinear
wave equations, {SIAM J. Numer. Anal.}   53 (2015) 1082-1106.




\bibitem{Grimm05}V. Grimm, On error bounds for the Gautschi-type exponential integrator applied to oscillatory second-order
differential equations, Numer. Math. 100 (2005)  71-89.


\bibitem {hairer2000} E. Hairer, Ch. Lubich,  Long-time energy
conservation of numerical methods for oscillatory differential
equations, SIAM J. Numer. Anal. 38 (2000) 414-441.




\bibitem{Cohen08-0}  E. Hairer,   Ch. Lubich, Spectral semi-discretisations of weakly non-linear
wave equations over long times, Found. Comput. Math. 8 (2008)  319-334.

\bibitem{hairer2006} E. Hairer, Ch. Lubich,   G. Wanner. Geometric Numerical
Integration: Structure-Preserving Algorithms for Ordinary
Differential Equations, 2nd edn.  Springer-Verlag, Berlin,
Heidelberg, 2006.


\bibitem{Lubich99}
M. Hochbruck, Ch. Lubich, A Gautschi-type method for oscillatory second-order differential equations, Numer.
Math. 83 (1999)  402-426.

\bibitem{LI22} B. Li, S. Ma, K. Schratz, A semi-implicit low-regularity integrator for Navier-Stokes equations, SIAM J. Numer. Anal. 60
(2022) 2273-2292.


\bibitem{LI23}
B. Li, K. Schratz,  F. Zivcovich, A second-order low-regularity correction of Lie splitting for the semilinear Klein-Gordon equation,
ESAIM: Math. Model. Numer. Anal. 57 (2023) 899-919


\bibitem{SU23}H. Li, C. Su,
Low regularity exponential-type integrators for the good Boussinesq equation, IMA J. Numer. Anal.   43 (2023) 3656-3684.



\bibitem{LI20} D. Li, W. Sun, Linearly implicit and high-order energy-conserving schemes for nonlinear Klein–Gordon equations, J. Sci.
Comput. 83 (2020) 65.




\bibitem{zhao18} { N.J. Mauser, Y. Zhang,   X. Zhao}, {On the rotating nonlinear Klein-Gordon equation: non-relativistic limit and
numerical methods}, SIAM J. Multi. Model. Simu.  (2020)  999-1024.


\bibitem{W23}Z. Miao, B. Wang,   Y.-L. Jiang, Numerical conservations of  energy, momentum and actions in the  full discretisation for nonlinear wave equations, J. Sci. Comput. 98 (2024) 10.



\bibitem{O18} A. Ostermann, K. Schratz, Low regularity exponential-type integrators for semilinear Schr\"{o}dinger equations, Found. Comput. Math. 18 (2018) 731-755.

\bibitem{O21} A. Ostermann, F. Rousset, K. Schratz, Error estimates of a Fourier integrator for the cubic Schr\"{o}dinger equation at low regularity., Found. Comput. Math. 21 (2021) 725-765.


\bibitem{O22} A. Ostermann, F. Rousset, K. Schratz, Fourier integrator for periodic NLS: low regularity estimates via discrete Bourgain
spaces, J. Eur. Math. Soc. 25 (2023) 3913-3952.





\bibitem{R21} F. Rousset, K. Schratz, A general framework of low-regularity integrators, SIAM J. Numer. Anal. 59 (2021) 1735-1768.

\bibitem{Shen}
{ J. Shen, T. Tang,   L. Wang},
Spectral Methods: Algorithms, Analysis and Applications, Springer, 2011.








\bibitem{WZ21}
  B. Wang,  X. Zhao, Geometric two-scale integrators for highly oscillatory system: uniform accuracy and near conservations, SIAM J. Numer. Anal.,  61 (2023)  1246-1277.


\bibitem{Zhao23}
Y. Wang, X. Zhao, A symmetric low-regularity integrator for nonlinear Klein–Gordon equation, Math. Comp. 91 (2022)
2215-2245.


\end{thebibliography}
\end{document}